\documentclass{article}
\usepackage{graphics}
\usepackage{graphicx}
\usepackage{latexsym}
\usepackage{amssymb}
\usepackage{amsmath}

\newtheorem{theorem}{Theorem}[section]
\newtheorem{lema}[theorem]{Lemma}
\newtheorem{prop}[theorem]{Proposition}
\newtheorem{corollary}[theorem]{Corollary}
\newtheorem{definition}{Definition}[section]
\newtheorem{preexample}{Example}[section]

\newtheorem{preremark}{Remark}



\newcommand{\qed }{ \hfill $\Box$ }

\begin{document}

\begin{center}
{\Large Vertical martingales, stochastic calculus and harmonic sections\\}

\end{center}

\vspace{0.3cm}

\begin{center}
{\large  Sim\~ao Stelmastchuk}  \\

\textit{Departamento de  Matem\'atica, Universidade Estadual do Paran\'a,\\ 84600-000 -  Uni\~ao da Vit\' oria - PR,
Brazil. e-mail: simnaos@gmail.com}
\end{center}

\vspace{0.3cm}

\begin{abstract}
This work is about a new class of martingales: the vertical martingales. We construct the vertical martingale for
smooth submersions and we develop a stochastic calculus for one. Furthermore, we give a stochastic characterization
for harmonic sections.
\end{abstract}

\noindent {\bf Key words:} vertical martingales; harmonic sections; stochastic analysis on manifolds

\vspace{0.3cm} \noindent {\bf MSC2010 subject classification:} 53C43, 58E20, 58J65, 60G48

\section{Introduction}

Let $\pi:E\rightarrow M$ be a Riemannian submersion with the totally geodesic fibers property and denote by $\nabla^{g}$
the Levi-Civita connection on $E$.  Since $\pi$ is a submersion, it is possible to define the vertical spaces by
$V_{p}E = \mathrm{ker}(\pi_{*p})$, $p \in E$. We also define the vertical connection $\nabla^{v}$ on $E$ by vertical
projection of the Levi-Civita connection $\nabla^{g}$ into $V_{p}E$. With this hypothesis, in
\cite{wood1}, C.Wood defines a harmonic section being a section $\sigma$ of $\pi$ such that
\[
\tau_{\sigma}^{v}= \mathrm{tr}\nabla^{v}\sigma_{*}^v = 0,
\]
where $\sigma_{*}^{v}$ is the vertical projection of $\sigma_{*}$ into $VE$. In fact, Wood shows that this definition
is consistent with a minimal solution of the vertical energy functional.

Our work has its main idea based on harmonic sections. We explain it. It is well-known that there is a stochastic characterization
for harmonic maps (see for example \cite{catuogno2} or \cite{emery1}). In a nutshell, if $M$ is a Riemannian manifold,
$N$ a smooth manifold with a symmetric connection and $\phi: M \rightarrow N$ a smooth map, then $\phi$ is a harmonic
map if and only if $\phi$ sends Brownian motions in $M$ into martingales in $N$. However, harmonic sections ask the
vertical connection on the target manifold. So, to construct a stochastic characterization for harmonic sections is
necessary a new concept of the martingale: the vertical martingale.

The environment of our work is more general that is used by Wood. Let $E, M$ be smooth manifolds such that there exists a submersion $\pi:E \rightarrow M$. We also endow $E$ with a symmetric connection. The stochastic calculus on manifolds says that to define a martingale is necessary the concept of the integral of It\^o. In this way, before to define the vertical martingales we need to construct the vertical It\^o integral. Based in the Rank theorem, we can define the vertical It\^o integral using the Schwartz Theory. Furthermore, we define the vertical Stratonovich integral and to show a formula of changes between both.

A way to show the stochastic characterization of harmonic maps is to use the geometric It\^o formula (see for example \cite{catuogno2}). In the same line, we construct a geometric It\^o formula for the vertical It\^o integral and the vertical Stratonovich integral. Both are useful. The geometric It\^o formula is used to give a stochastic characterization for the vertical harmonic maps.

The stochastic characterization of the vertical harmonic map directly gives that a section $\sigma:M \rightarrow E$ of $\pi$, where $M$ is a Riemannian manifold, is harmonic section if and only if $\sigma$ sends Brownian motions  into vertical martingales.

As applications we study the vertical martingales in the tangent space $TM$ endowed with the complete lift connection or the Sasaky metric and, consequently, we conclude that every harmonic section with values in $TM$ is the 0-section. Furthermore, we study the vertical martingales in the Riemannian principal fiber bundle.

\section{Preliminaries}

We begin by recalling some fundamental facts on Schwartz Theory and stochastic calculus on manifolds. We shall freely use concepts and notations from S. Kobayashi and N. Nomizu \cite{kobay}, L. Schwartz \cite{schwartz}, P.A. Meyer \cite{meyer} and M. Emery \cite{emery1}. A quick survey in these concepts is described by P. Catuogno in \cite{catuogno2}.

Let $M$ be a smooth manifold and $x \in M$. The second order tangent space to $M$ at $x$, which is denoted by $\tau_{x}M$, is the vector space of all differential operators on M at x of order at most two without a constant term. Taking a local system of coordinates $(x_{1}, \ldots, x_{n})$ around at x we can write every $L \in \tau_{x}M$, in a unique way, as
\[
 L = a_{ij}D_{ij} + a_{i}D_{i},
\]
where $a_{ij}=a_{ji}$, $D_{i}= \frac{\partial}{\partial x^{i}}$ and $D_{ij}=\frac{\partial^{2}}{\partial x^{i}\partial
x^{j}}$ are differential operators at x (we shall use the convention of summing over repeated indices). The elements of
$\tau_{x}M$ are called second order tangent vectors at $x$, the elements of the dual vector space $\tau_{x}^{*}M$ are
called second order forms at $x$.

The disjoint union $\tau M = \bigcup_{x \in M} \tau_{x}M$ (respectively, $\tau^{*}M  = \bigcup_{x \in M}
\tau_{x}^{*}M$) is canonically endowed with a vector bundle structure over $M$, which is called the second order
tangent fiber bundle (respectively, second order cotangent fiber bundle) of $M$.

Let $M, N$ be smooth manifolds, $F :M\rightarrow N$ a smooth map and $L\in \tau _{x}M$. The differential of $F$ ,
$F_{*}(x):\tau_{x}M \rightarrow \tau_{F(x)}N$, is given by
\[
F _{*}(x)L(f)=L_{x}(f\circ F ),
\]
where $f \in \mathcal{C}^{\infty}(N)$.

Let $L$ be a second order vector field on $M$. The square operator of $L$, denoted by $QL$, is the symmetric tensor
given by
\[
QL(f,g)=\frac{1}{2}(L(fg)-fL(g)-gL(f)),
\]
where $f,g\in C^{\infty }(M)$. Let $x \in M$. We consider $Q_{x}:\tau_{x}M\rightarrow T_{x}M\odot T_{x}M$ as the linear
application defined by
\[
Q_{x}(L = a_{ij}D_{ij} + a_{i}D_{i}) = a_{ij}D_{i} \odot D_{j}.
\]

Push-forward of the second order vectors by smooth maps is related to the so called Schwartz morphisms between second order tangent vector bundles.
%
\begin{definition}
Let $M$ and $N$ be smooth manifolds, $x\in M$ and $y\in N$. A linear application $F:\tau_{x}M\rightarrow \tau _{y}N$ is called Schwartz morphism if
\begin{enumerate}
\item  $F(T_{x}M)\subset T_{y}N$;
\item  for all $L\in \tau _{x}M$ we have $Q(FL)=(F\otimes F)(QL)$.
\end{enumerate}
\end{definition}

A linear application $F:\tau_{x}M \rightarrow \tau_{y}N$ is a Schwartz morphism if and only if there exists a smooth
map $\phi: M \rightarrow N$ with $\phi(x)=y$ such that $F=\phi_{x*}$ (see for example Proposition 1 in \cite{emery2}).

Let $(\Omega, (\mathcal{F}_t),\mathbb{P})$ be a filtered probability space which satisfies the usual conditions (see
for instance \cite{emery1}).
\begin{definition}
Let $M$ be a smooth manifold and $X$ a stochastic process with values in $M$. We call $X$ a semimartingale if, for
all $f$ smooth on $M$, $f(X)$ is a real semimartingale.
\end{definition}

L. Schwartz has noticed, in \cite{schwartz}, that if $X$ is a continuous semimartingale in a smooth manifold $M$, the
It\^o's differentials $dX_{i}$ and $d[X_{i},X_{j}]$ (where $(x_{i})$ is a local coordinate system and $X_{i}$ is the
ith coordinate of $X$ in this system) behave under a change of coordinates as the coefficients of a second order
tangent vector. The (purely formal) stochastic differential
\[
 d^{2}X_{t} = dX^{i}_{t}D_{i} + \frac{1}{2}d[X^{i},X^{j}]_{t}D_{ij},
\]
is a linear differential operator on $M$, at $X_{t}$, of order at most two, with no constant term. Therefore, the
tangent object to $X_{t}$ is formally one of second order. This fact is known as Schwartz principle.

From now on we assume that all {\it semimartingales are continuous}.

Let $X_t$ be a semimartingale in $M$. Let $\Theta_{X_t} \in \tau^*_{X_t}M$ be an adapted stochastic second order form
along $X_t$. Let $(U,x^i)$ be a local coordinate system in $M$. With respect to this chart the second order form
$\Theta$ can be written as \linebreak $\Theta_x= \Theta_i(x)d^2x^i + \Theta_{ij}(x)dx^i\cdot dx^j$, where $\Theta_i$ and
$\Theta_{ij}=\Theta_{ji}$ are ($\mathcal{C}^{\infty}$ say) functions on $M$. Then the integral of $\Theta$ along $X$ is
defined, locally, by
\begin{equation}\label{secondintegral}
\int_0^t \Theta d^2X_{s}=\int_0^t \Theta_i(X_s)dX^i_s + \int_0^t \Theta_{ij}(X_s)d[X^i,X^j]_s.
\end{equation}

Let $b$ be a section of $T^{2}_{0}(M)$, which is defined along $X_t$. The quadratic integral of $b$ along $X_{t}$ is defined, locally,
by
\[
 \int_0^{t}b\;(dX,dX)_s = \int_{0}^{t} b_{ij}(X_{s}) d[X^{i},X^{j}]_{s},
\]
where $b(x) = b_{ij}(x)dx^{i} \otimes dx^{j}$ and $b_{ij}$ are smooth functions. Here, we observe that both integrals $\int_0^t \Theta d^2X_s$ and $\int_0^{t}b\;(dX,dX)_s$ are well defined. To see these facts we refer the reader to \cite{emery1}.

Let $M$ be a smooth manifold endowed with a symmetric connection $\nabla^{M}$. In \cite{meyer},  P. Meyer showed that for $\nabla^{M}$ there exists a section $\Gamma^{M}$ in $Hom(\tau M,TM)$ such that $\Gamma^{M}|_{TM}=Id_{TM}$ and $\Gamma^{M}(AB)= \nabla^{M}_{A}B$, where $A,B \in TM$. We also say connection to $\Gamma^{M}$.

Let $M$ be a smooth manifold endowed with a symmetric connection $\Gamma^{M}$. Let $X$ be a semimartingale in $M$ and $\theta$ be a 1-form along $X$. The It\^o integral is defined by $\int_{0}^{t} \theta d^{M}X:=\int_{0}^{t}\Gamma^{M*}\theta d^{2}X_{t}$. Furthermore, $X$ is said a $\nabla^{M}$-martingale if for every
1-form $\theta$ on $M$ we have that $\int_{0}^{t} \theta d^{M}X$ is a real local martingale.

\begin{definition}
Let $M$ be a Riemmanian manifold with a metric $g$. A semimartingale $B$ in $M$ is called a Brownian motion if $\int_{0}^{t} \theta d^{g}B_t$ is a real local martingale for all $\theta \in T^{*}M$, where $\Gamma^{g}$ is the Levi-Civita connection, and for any section $b$ in $T^{2}_{0}(M)$ we have \begin{equation}\label{Brownian}
\int_0^tb(dB,dB)=\int_0^t\rm{tr}\,b_{B_{s}}ds.
\end{equation}
\end{definition}


\section{Vertical connection}

Let $E, M $ be differential manifolds such that there is a smooth submersion \linebreak $\pi:E\rightarrow M$. Let us
denote the vertical distribution by $VE=\mathrm{ker}\,(\pi_{*})$ and vertical projection by $\mathbf{v}: TE \rightarrow
VE$. A vector field $X$ on $E$ is called vertical if $X_{p} \in V_{p}E$ for all $ p \in E$. Analogous, a 1-form $\theta$ is
called vertical form if $\theta(p) \in V^{*}_{p}E$ for all $p \in E$.

Let $ p \in E$, by Rank Theorem, there exist a coordinate $(x_1, \ldots, x_m, v_{1}, \ldots, v_{k})$ of some neighborhood $U \ni p$ such that
\begin{equation}\label{coordinates}
\pi(x_1,\ldots,x_m,v_{1}, \ldots, v_{k})=  (x_1,\ldots,x_m),
\end{equation}
where $(x_1,\ldots,x_m)$ is a coordinate of a neighborhood $V \ni \pi(p)$. The possibility of choice these coordinates, is fundamental to construct the vertical martingale.

Taking the coordinates (\ref{coordinates}) we obtain in $T_p E$ the coordinate basis \linebreak $\{\bar{D}_{1}(p), \ldots, \bar{D}_{m}(p), D_{1}(p), \ldots, D_{k}(p) \}$, where $\bar{D}_{i} = \partial/\partial x^{i}, i = 1, \ldots m$, and $D_{\alpha}=\partial / \partial v^{\alpha}, \alpha=1, \ldots k$. It is clear that $\{D_{1}(p), \ldots, D_{k}(p)\}$ are vertical vectors in $T_pE$ and it also spans the vertical space $V_pE$. Also, in coordinates (\ref{coordinates}), a
second vector $L$ in $\tau_p E$ is written as
\[
L(p) = a_{\alpha\beta}D_{\alpha\beta}(p) + a_{ij}D_{ij}(p) +a_{\alpha j}D_{\alpha j}(p) + a_{\alpha}D_{\alpha}(p) + a_{i}D_{i}(p)  .
\]
We denote by $\mathfrak{V}_pE$ the subspace spanned by $\{D_{\alpha\beta},D_{\alpha j},D_{\alpha}; \alpha, \beta= 1, \ldots, k \}$. Here, we observe that $\mathfrak{V}_{p}M$  is the ker$\pi_{*}(p)$. We denote by $\mathfrak{V} E = \bigcup_{p \in E}\mathfrak{V}_{p}E$ and by $\mathbf{v}:\tau E \rightarrow \mathfrak{V}E$ the vertical projection.

Given a symmetric connection $\nabla^{E}$ on $E$, in each fiber $\pi^{-1}(x)$, $x \in M$, we can induce a connection $\nabla^{x}$ from the connection $\nabla^{E}$. In this way, $\nabla^{x}$ is the vertical projection of $\nabla^{E}$ on $\pi^{-1}(x)$ for vertical vector fields. For our purpose we generalize this concept in the following way: let $U$ be a vertical vector fields on $E$ and $X$ a vector field on $E$, we define the vertical connection on $E$ by $\nabla^{v}_{X}V= \mathbf{v}\nabla^{E}_{X}V$.

Now we want to see the vertical connection in the context of second order. As $\Gamma^{E}$ is a linear homomorphism from $\tau E$ into $TE$ we can restrict $\Gamma^{E}$ to $\mathfrak{V}E$. A little bit more, we take $\Gamma^{v}(L) = \mathbf{v}\Gamma^{E}(L)$, where $L \in \mathfrak{V}E$. The connection $\Gamma^{v}$ is the object that is associated with $\nabla^{v}$. In fact, if $V,X$ are a vertical vector fields and a vector field on $E$, respectively, then $XV$ is a second vector field in $\mathfrak{V}E$. A simple account gives $\Gamma^{v}(XV) = \nabla^{v}_{X}V$.

Another way to view the vertical connection $\nabla^{v}$ on $E$ is as a horizontal distribution on the frame bundle $BE$ of $E$. We describe in the following this fact.  Denote by $\pi_{BE}$ the natural projection of $BE$ into $E$. We can decompose each tangent space $T_{p}BE$ into the direct sum of the vertical subspace $V_{p}BE = Ker(\pi_{BE*}(p))$ and the horizontal subspace $H_{p}^{v}BE$ of the tangent at $p$ of horizontal lifts of curves in $M$. We recall that if $\gamma : I \rightarrow E$ is a curve in $E$, the horizontal lift of $\gamma$ to $BE$ can be written as the composition
\[
\gamma^{H^{v}}_{p}(t):= P^{\nabla^{v}}_{t,0}(\gamma) \circ p,
\]
where $P^{\nabla^{v}}_{t,s}(\gamma) : T_{\gamma(s)}E \rightarrow T_{\gamma(t)}E$ is the parallel transport along the curve $\gamma$.

\section{Vertical integral of It\^o and Stratonovich}

These three different way to view connection allow us to work with martingales in the following contexts: in second order context introduced by P. Meyer \cite{meyer},  in local coordinates system, founded in \cite{emery1}, and by the context of horizontal lifts and development, founded in \cite{hsu}. Due the nature our work, we choose introduce the vertical martingales through the second order context. However, it is a direct consequence the equivalence with other context, see Proposition \ref{equivalencemartingale}.

We now proceed to construct an integral of It\^o for the vertical connections. Let $X$ be a semimartingale in $E$. Adopting the coordinates (\ref{coordinates}) we obtain, by Schwartz principle,
\[
 d^{2}X_{t}  =  dX^{\alpha}_{t}D_{\alpha} + dX^{i}_{t}D_{i} + \frac{1}{2}d[X^{\alpha},X^{\beta}]_{t}D_{\alpha \beta} +
 \frac{1}{2}d[X^{i},X^{j}]_{t}D_{ij}+ \frac{1}{2}d[X^{i},X^{\beta}]_{t}D_{i\beta}.
 \]
Since that $d^{2}X_{t} \in \tau_{X_t}M$, we can project it into $\mathfrak{V}_{X_{t}}(E)$, that is,
\[
\mathbf{v}(d^{2}X_{t})= dX^{\alpha}_{t}D_{\alpha} + \frac{1}{2}d[X^{\alpha},X^{\beta}]_{t}D_{\alpha \beta}  + \frac{1}{2}d[X^{\alpha},X^{j}]_{t}D_{\alpha j}.
\]

Let $\Theta_{X_t}$ be an adapted stochastic second order form along $X_t$ such that $\Theta_{X_t} \in \mathfrak{V}^*_{X_t}M$. We take the coordinates (\ref{coordinates}) in $E$. With respect to these coordinates the second order form $\Theta$ can be written as
\[
\Theta_p = \Theta_\alpha d^2v^\alpha(p) +  0\Theta_{\alpha\beta}dv^\alpha\cdot dv^\beta(p) + \Theta_{\alpha j}dv^\alpha\cdot dv^j(p),
\]
where $\Theta_\alpha$, $\Theta_{\alpha\beta}=\Theta_{\beta\alpha}$ and $\Theta_{\alpha j}=\Theta_{j \alpha}$ are
($\mathcal{C}^{\infty}$ say) functions in $E$. From definition of the integral (\ref{secondintegral}) we can see that
\begin{eqnarray}\label{verticalintegral}
\!\!\! \int_{0}^t\!\! \Theta d^2X_s \!\!\! & = &\!\!\! \int_0^t\!\! \Theta_\alpha(X_s)dX^\alpha_s \!\! +\!\! \frac{1}{2}\int_0^t\!\!
\Theta_{\beta \gamma}(X_s)d[X^\beta,X^\gamma]_s \! +\! \frac{1}{2}\int_0^t\!\!
\Theta_{\beta j}(X_s)d[X^\beta,X^j]_s \nonumber \\
& = &\!\!\! \int_{0}^t \Theta \mathbf{v} d^2X_t.
\end{eqnarray}

We observed that the integral $\int_{0}^t \Theta \mathbf{v} d^2X_t$ is well posed. In the sense that it is based in the
$\int_{0}^t \Theta d^2X_t$, which has a good definition (see for example Theorem 2.10 in \cite{emery2} or section 4 in
\cite{meyer}).

Now, since $\mathbf{v} d^{2}X_{t} \in \mathfrak{V}_{X_{t}}(E)$, there is sense in $\Gamma^{v}(\mathbf{v}(d^{2}X_{t}))$. A little bit more, if $\theta$ is a vertical form in $E$, taking the coordinates (\ref{coordinates}) we can write $\theta(p) = \theta_{\alpha}(p) dv^{\alpha}$ and, consequently,
\[
\Gamma^{v*}\theta(p)\! =\! \theta^{\alpha}(p)(d^2v^{\alpha}\! +\! \Gamma^{\alpha}_{\beta \gamma}(p) dv^{\beta}\cdot dv^{\gamma}\! +\! \Gamma^{\alpha}_{\beta j}(p) dv^{\beta}\cdot dv^{j}).
\]
It follows that every tools to define a It\^o integral for vertical connections are well posed.

\begin{definition}
Let $E, M$ be differential manifolds such that there is a smooth submersion $\pi:E\rightarrow M$. Let $\nabla^{E}$ be a
symmetric connection, $X$ a semimartingale on $E$ and $\theta$ a vertical form on $E$. We define the vertical It\^o integral of $\theta$ along $X$ as $\int_{0}^{t} \theta d^{v} X_s = \int_{0}^{t} \Gamma^{v*}\theta (\mathbf{v}d^{2}X_{s})$. Let $(U,x_1,\ldots,x_m, v_{1}, \ldots, v_{k})$ be a chart such that
(\ref{coordinates}) is true. Then, locally, the vertical It\^o integral of $\theta$ along $X$ is given by
\[
\int_{0}^{t}\!\! \theta d^{v} X_s\! =\! \int_0^t\!\! \theta_\alpha(X_s)dX^\alpha_s \!+\! \frac{1}{2}\!\int_0^t\!\!
\Gamma^\alpha_{\beta \gamma}\theta_\alpha(X_s)d[X^\alpha,X^\beta]_s \!+\! \frac{1}{2}\!\int_0^t\!\!
\Gamma^\alpha_{\beta j}\theta_\alpha(X_s)d[X^\alpha,X^j]_s.
\]
\end{definition}

One can observe that the vertical It\^o integral is well posed because the right side of the above equality is an
integral as (\ref{verticalintegral}).

From the Definition above we can define our main object: the vertical martingales.

\begin{definition}
A semimartingale $X_{t}$ in $E$ is called a vertical martingale if $\int_{0}^{t} \theta d^{v} X_s $ is a real local martingale for every
vertical form $\theta$ on $E$.
\end{definition}

Let $X$ be a semimartingale on $E$ and  $W$ the anti-development of $X$ in $\mathbb{R}^{m+k}$, see for example section 2.3 in \cite{hsu}. Write  $W= W_{1} + W_{2}$, where $W_{1}$ is a semimartingale in $\mathbb{R}^{m}$ and $W_{2}$ is a semimartingale in $\mathbb{R}^{k}$. A stopping time argument assures that we can view a semimartingale $X$ in $E$ in a local coordinates system. This allows to characterize the vertical martingales in the following way.

\begin{prop}\label{equivalencemartingale}
Suppose that $(U,x_1,\ldots,x_m, v_{1}, \ldots, v_{k})$ be a chart on $E$ such that (\ref{coordinates}) is true. Then we are the equivalences:
\begin{enumerate}
 \item $X$ is a vertical martingale.
 \item  $ X^{\alpha}(t)= X^{\alpha}(0) +\, local\, martingale\, +  \frac{1}{2}\int_{0}^{t} \Gamma^{\alpha}_{\beta R}(X_{t})d[X^{\beta}, X^{R}], \alpha,\beta=1, \ldots k$ and $R=1,\ldots, k+m$.
 \item $W_{2}$ is local martingale in $\mathbb{R}^{k}$.
\end{enumerate}
\end{prop}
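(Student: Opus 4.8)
The plan is to prove the cycle of implications $(1)\Rightarrow(2)\Rightarrow(1)$ via local coordinates, and $(2)\Leftrightarrow(3)$ via the parallel transport / anti-development machinery. First I would fix the chart $(U,x_1,\dots,x_m,v_1,\dots,v_k)$ satisfying $(\ref{coordinates})$ and use the stopping-time argument mentioned in the text to reduce everything to a statement about the process $X$ living in $U$; this is harmless since being a local martingale is a local property.

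For $(1)\Leftrightarrow(2)$ I would argue as follows. By definition $X$ is a vertical martingale iff $\int_0^t\theta\,d^vX_s$ is a real local martingale for \emph{every} vertical form $\theta$. In the chart, a vertical form is $\theta = \theta_\alpha\,dv^\alpha$, and by the local formula in the definition of the vertical It\^o integral,
\[
\int_0^t\theta\,d^vX_s = \int_0^t\theta_\alpha(X_s)\,dX^\alpha_s + \frac12\int_0^t\theta_\alpha(X_s)\Gamma^\alpha_{\beta R}(X_s)\,d[X^\beta,X^R]_s,
\]
writing $R$ for the combined index running over $1,\dots,k+m$ so that the $d[X^\beta,X^\gamma]$ and $d[X^\beta,X^j]$ terms are absorbed together. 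Now decompose each $X^\alpha$ into its bounded-variation part and its local-martingale part (Doob--Meyer in the chart). The displayed integral is a local martingale for all choices of the smooth functions $\theta_\alpha$ if and only if, for each $\alpha$, the finite-variation part of $X^\alpha$ equals $\frac12\int_0^t\Gamma^\alpha_{\beta R}(X_s)\,d[X^\beta,X^R]_s$; one direction is immediate by substituting, and the converse follows by taking $\theta$ to be (a bump function times) each coordinate differential $dv^\alpha$ in turn, exactly as in the classical characterization of $\nabla$-martingales in \cite{emery1}. That is precisely statement $(2)$.

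For $(2)\Leftrightarrow(3)$ I would use that the vertical connection $\nabla^v$ corresponds, as explained in Section~3, to the horizontal distribution $H^vBE$ on the frame bundle, and that $W = W_1 + W_2$ is the anti-development of $X$ with respect to (an extension of) this data, with $W_2$ the component along the vertical directions $v_1,\dots,v_k$. The stochastic development equation expresses $dW_2$ in terms of $dX^\alpha$ corrected by the Christoffel symbols $\Gamma^\alpha_{\beta R}$ contracted against $d[X^\beta,X^R]$; concretely, $W_2$ is a local martingale in $\mathbb{R}^k$ iff the drift of each $X^\alpha$ is the Christoffel correction term, which is again condition $(2)$. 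I would cite section~2.3 of \cite{hsu} for the fact that anti-development intertwines martingales with Euclidean local martingales and carry out the identification of the vertical block of the development equation with the local formula above.

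The main obstacle I expect is the bookkeeping in $(2)\Leftrightarrow(3)$: one must be careful that the horizontal lift used to define the anti-development is built from $\nabla^v$ and not from the full connection $\nabla^E$, so that only the vertical Christoffel symbols $\Gamma^\alpha_{\beta R}$ (with upper index vertical) enter, and that the mixed brackets $d[X^\alpha,X^j]$ with $j$ a base index are correctly included — the special form of the coordinates $(\ref{coordinates})$, in which $\pi$ is a projection, is exactly what makes this clean. The $(1)\Leftrightarrow(2)$ direction is routine once the decomposition into drift and martingale parts is in place.
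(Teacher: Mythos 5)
Your proposal is correct and follows essentially the same route as the paper: identify $\int_0^t dv^\alpha\, d^vX_s$ with the drift-corrected coordinate process and conclude via Emery's local-coordinate characterization of martingales for $(1)\Leftrightarrow(2)$ together with an adaptation of Hsu's Proposition 2.5.4 on anti-developments for $(2)\Leftrightarrow(3)$, which is exactly what the paper does, only more tersely. One minor blemish: with your displayed formula for $\int_0^t\theta\,d^vX_s$, that integral is a local martingale precisely when the finite-variation part of $X^\alpha$ is $-\tfrac{1}{2}\int_0^t\Gamma^\alpha_{\beta R}(X_s)\,d[X^\beta,X^R]_s$ rather than $+\tfrac{1}{2}$, a sign discrepancy inherited from the statement itself (and present in the paper's own proof), so it does not affect the structure of your argument.
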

\begin{proof}
Let $(U,x_1,\ldots,x_m, v_{1}, \ldots, v_{k})$ be a chart on $E$ such that (\ref{coordinates}) is true. Then $X^{\alpha} = v_{\alpha} \circ X$ and $X^{j} = x_{j} \circ X$. It is direct to see that $dv^{\alpha}$, $\alpha= 1, \ldots, k$, are vertical forms. Let $M$ a $\mathbb{R}^{k+m}$-valued semimartingale defined by
\[
M^{\alpha}(t) = X^{\alpha}(t) - X^{\alpha}(0) -  \frac{1}{2}\int_{0}^{t} {\Gamma^{v}}^{\alpha}_{\beta R}(X_{t})d[X^{\beta}, X^{R}]
\]
and
\[
 M^{j}(t) = X^{j}(t) - X^{j}(0) -  \frac{1}{2}\int_{0}^{t} {\Gamma^{v}}^{j}_{Sl}(X_{t})d[X^{S}, X^{l}],
\]
where $R,S = 1, \ldots, m+k$ and ${\Gamma^{v}}^{\alpha}_{\beta R}$ and ${\Gamma^{v}}^{j}_{Sl}$ are the Christoffel symbols of $\nabla^{v}$. By definition of vertical martingale, we are only interested in $M^{\alpha}(t)$. In fact, $M^{\alpha}(t) = \int_{0}^{t} dv^{\alpha}d^{v}X_{s}$. Therefore, the proof is a direct adaptation of proof of Proposition 2.5.4 in \cite{hsu} and characterization of martingales in local coordinates founded at page 37 in \cite{emery1}.\qed
\end{proof}



Our next step is to define a vertical Stratonovich integral. Let $U$ be a chart in $E$  with coordinates (\ref{coordinates}). Let $\theta$ be a vertical form on $E$, so in coordinates we have $\theta = \theta_{\alpha} dv^{\alpha}$. It is well known to define the Stratonovich integral we need to yields a second order form from one form $\theta$. For such, we use the operator $d:T^{*}E \rightarrow \tau^{*}E$ which is, locally, given by
\[
d\theta(p) = \theta_{\alpha}(p) d^2v^{\alpha} + D_{\alpha}\theta_{\beta}(p)dv^{\alpha}\cdot dv^{\beta}+
D_{i}\theta_{\alpha}(p)dx^{i}\cdot dv^{\alpha}.
\]
It is direct to see that $d\theta$ belongs to $\mathfrak{V}^{*}E$. Therefore, taking a semimartingale $X$ in $E$ there is sense in
\[
d\theta(p)(\mathbf{v}d^{2}X_{t}) = \theta_{\alpha}(X_{t})dX^{\alpha} +
\frac{1}{2}D_{\alpha}\theta_{\beta}(X_{t})d[X^{\alpha},X^{\beta}] + +
\frac{1}{2}D_{i}\theta_{\beta}(X_{t})d[X^{i},X^{\beta}].
\]
Therefore, for (\ref{verticalintegral}), we have a good definition for the vertical Stratonovich integral.

\begin{definition}
Let $E, M$ be differential manifolds such that there is a smooth submersion $\pi:E\rightarrow M$. Let $X$ be a semimartingale on $E$ and $\theta$ a vertical form on $E$ along $X$. We define the vertical Stratonovich integral of $\theta$ along $X$ as $\int \theta \delta^{v} X_t = \int d\theta (\mathbf{v}d^{2}X_{t})$. Let $(U,x_1,\ldots,x_m, v_{1}, \ldots, v_{k})$ be a chart such that (\ref{coordinates}) is true. Then, locally, the vertical Stratonovich integral of $\theta$ along $X$ is given by
\[
\int_0^t\!\! \theta \delta^{v} X_s\! =\! \int_0^t\!\! \theta_\alpha(X_s)dX^\alpha_s\! +\! \frac{1}{2}\int_0^t\!\!\!
D_\alpha\theta_\beta(X_s)d[X^\alpha,X^\beta]_s\! +\! \frac{1}{2}\int_0^t\!\!\!
D_i \theta_\beta(X_s)d[X^i,X^\beta]_s.
\]
\end{definition}

From definition of the vertical It\^o and Stratonovich integrals we can show a change formula between they. In fact, a simple computation in coordinates gives

\begin{prop}\label{changeverticalformula}
For a vertical form $\theta$ on $E$ and a semimartingale $X_t$ on $E$ we have
\[
\int^{t}_{0} \theta \delta^{v}X_{s} = \int^{t}_{0} \theta d^{v}X_{s} - \frac{1}{2}\int^{t}_{0} \nabla^{v}\theta(dX,\mathbf{v}dX)_s,
\]
where $\mathbf{v}dX$ is the vertical projection of $dX$ into $VE$.
\end{prop}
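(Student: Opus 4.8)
The plan is to prove the change-of-variables formula by a direct computation in the adapted coordinates (\ref{coordinates}), comparing the local expressions for the two integrals term by term. Fix a chart $(U,x_1,\ldots,x_m,v_1,\ldots,v_k)$ in which $\pi$ has the form (\ref{coordinates}), and write the vertical form as $\theta = \theta_\alpha\,dv^\alpha$. Both $\int_0^t\theta\,\delta^v X_s$ and $\int_0^t\theta\,d^v X_s$ have explicit local expressions from their respective definitions, each beginning with the same It\^o term $\int_0^t\theta_\alpha(X_s)\,dX^\alpha_s$. Subtracting, the difference is a sum of quadratic-variation integrals:
\[
\int_0^t\theta\,\delta^v X_s - \int_0^t\theta\,d^v X_s
= \tfrac12\int_0^t\bigl(D_\alpha\theta_\beta - \Gamma^\gamma_{\alpha\beta}\theta_\gamma\bigr)(X_s)\,d[X^\alpha,X^\beta]_s
+ \tfrac12\int_0^t\bigl(D_i\theta_\beta - \Gamma^\gamma_{i\beta}\theta_\gamma\bigr)(X_s)\,d[X^i,X^\beta]_s,
\]
where the Christoffel symbols are those of $\nabla^v$ appearing in the local formula for $\Gamma^{v*}\theta$. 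The first step is therefore to recognize each bracket as a component of the covariant derivative $\nabla^v\theta$: indeed $(\nabla^v\theta)(D_\beta,D_\alpha) = D_\alpha\theta_\beta - \Gamma^\gamma_{\alpha\beta}\theta_\gamma$ and $(\nabla^v\theta)(D_\beta,D_i) = D_i\theta_\beta - \Gamma^\gamma_{i\beta}\theta_\gamma$ (only vertical lower indices survive because $\theta$ is a vertical form, so $\theta_j = 0$ for horizontal indices $j$).

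The second step is to assemble these components into the intrinsic object $\nabla^v\theta(dX,\mathbf{v}dX)$. Here one uses that the quadratic-integral notation $\int_0^t b\,(dX,\mathbf{v}dX)_s$ means, in coordinates, $\int_0^t b_{AB}(X_s)\,d[X^A,(\mathbf{v}X)^B]_s$ where $A$ runs over all $m+k$ indices and $\mathbf{v}X$ picks out the vertical part, i.e. $(\mathbf{v}dX)^B = dX^B$ for $B$ a vertical index $\beta$ and $0$ for $B$ horizontal. Splitting the sum over the first slot $A$ into its vertical indices $\alpha$ and horizontal indices $i$ reproduces exactly the two families of quadratic-variation integrals above, with $b = \nabla^v\theta$. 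Combining steps one and two yields the claimed identity.

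The main obstacle — really the only one — is bookkeeping: being careful that $\mathbf{v}$ in the coordinate expression $\mathbf{v}(d^2X_t)$ keeps precisely the terms $dX^\alpha D_\alpha$, $d[X^\alpha,X^\beta]D_{\alpha\beta}$, $d[X^\alpha,X^j]D_{\alpha j}$ and kills the purely horizontal and mixed-horizontal pieces, so that the It\^o terms in the two integrals genuinely coincide and only the quadratic corrections differ; and that the two slots of $\nabla^v\theta$ are assigned consistently with the asymmetry $(dX,\mathbf{v}dX)$ rather than $(\mathbf{v}dX,dX)$, which is what produces the $D_i\theta_\beta\,d[X^i,X^\beta]$ term with a horizontal index in the first slot and a vertical one in the second. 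Once the index conventions are pinned down, the proof is the "simple computation in coordinates" advertised before the statement, and no convergence or well-posedness issues arise since each integral has already been shown to be well defined.
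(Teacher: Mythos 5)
Your proposal is correct and is exactly the ``simple computation in coordinates'' that the paper invokes without writing out: subtract the two local expressions, identify the quadratic-variation coefficients as components of $\nabla^{v}\theta$, and reassemble them as $\nabla^{v}\theta(dX,\mathbf{v}dX)$. The only quibble is a harmless one: your component formulas put the form argument in the first slot while the paper's later usage ($\nabla^{v}\theta(A,\mathbf{v}A)=A\theta(\mathbf{v}A)-\theta(\nabla^{v}_{A}\mathbf{v}A)$) puts the differentiation direction first, but since $d[X^{A},X^{\beta}]$ is symmetric this does not affect the identity.
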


\section{The geometric It\^o formulas}

Our main goal in this section is to construct some tools of the Stochastic Calculus, in a vertical way. More explicitly, we will construct the It\^o formulas for vertical It\^o integral and vertical Stratonovich integral. Our ideas are inspired in \cite{catuogno2}. For beginning, we show a version of Proposition 1.8 in \cite{emery2}.

\begin{lema}\label{commuteverticalform}
Let $E, M$ be differential manifolds such that there is a smooth submersion $\pi:E\rightarrow M$, $\phi:N \rightarrow
E$ a smooth map and $\theta$ a vertical form on $E$. Then
\[
(\mathbf{v}\phi)^{*}d\theta = d((\mathbf{v}\phi)^{*}\theta).
\]
\end{lema}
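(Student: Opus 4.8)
The plan is to reduce this vertical statement to its classical counterpart — the commutation of the Schwartz operator $d$ with smooth pull-back of second order forms (Proposition 1.8 in \cite{emery2}) — by showing that the vertical projection $\mathbf v$ may be erased from both sides of the asserted identity. Since $(\mathbf v\phi)^*d\theta$ and $d((\mathbf v\phi)^*\theta)$ are each defined independently of the chart, it suffices to verify the equality on a single adapted chart. So I would fix coordinates $(x_1,\ldots,x_m,v_1,\ldots,v_k)$ on $E$ of the form (\ref{coordinates}) and arbitrary coordinates on $N$, write the vertical form as $\theta=\theta_\alpha\,dv^\alpha$, and introduce the genuine $1$-form $\tilde\theta\in\Gamma(T^*E)$ defined by the same local expression $\tilde\theta=\theta_\alpha\,dv^\alpha$, i.e. the representative of $\theta$ whose horizontal component vanishes in this chart. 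Comparing the local formula for the vertical $d$ with the local formula for the classical $d\colon T^*E\to\tau^*E$, one sees immediately that $d\theta=d\tilde\theta$ as sections of $\tau^*E$.

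The crux is the remark that $\mathbf v$ is invisible here. Recall that $\mathbf v\colon\tau E\to\mathfrak V E$ is the projection annihilating the span of $\{D_{ij},D_i\}$, and that $d\theta\in\mathfrak V^*E$, i.e. $d\theta$ annihilates exactly that span; hence $d\theta\circ\mathbf v=d\theta$ as a functional on each $\tau_pE$. Therefore, for every $L\in\tau_xN$,
\[
(\mathbf v\phi)^*d\theta\,(L)=d\theta\big(\mathbf v\,\phi_*L\big)=d\theta\big(\phi_*L\big)=\big(\phi^*(d\tilde\theta)\big)(L),
\]
so $(\mathbf v\phi)^*d\theta=\phi^*(d\tilde\theta)$, the ordinary pull-back (the transpose of $\phi_*$) of the second order form $d\tilde\theta$. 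The same mechanism applies at first order: $\tilde\theta=\theta_\alpha\,dv^\alpha$ annihilates every $\partial/\partial x^i$, so $\tilde\theta\circ\mathbf v=\tilde\theta$ on $T_pE$, while $\theta$ and $\tilde\theta$ agree on $V_pE$; hence for $v\in T_xN$ one gets $(\mathbf v\phi)^*\theta\,(v)=\theta(\mathbf v\phi_*v)=\tilde\theta(\mathbf v\phi_*v)=\tilde\theta(\phi_*v)=(\phi^*\tilde\theta)(v)$, that is $(\mathbf v\phi)^*\theta=\phi^*\tilde\theta$.

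It then only remains to invoke the classical commutation $\phi^*(d\tilde\theta)=d(\phi^*\tilde\theta)$ and to chain the three identities,
\[
(\mathbf v\phi)^*d\theta=\phi^*(d\tilde\theta)=d(\phi^*\tilde\theta)=d\big((\mathbf v\phi)^*\theta\big),
\]
which is the claim. The step I expect to require the most care is the one where $\mathbf v$ disappears: it rests on $d\theta$ having no $d^2x^i$ and no $dx^i\cdot dx^j$ component, and it is precisely here that the verticality of $\theta$ enters — the $x^i$-coefficient of $\theta$ vanishes, so the would-be $d^2x^i$-coefficient of $d\theta$ vanishes, and the $dx^i\cdot dx^j$-coefficients of $d\theta$ are built from derivatives of that vanishing coefficient. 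Everything else is routine checking of local formulas. If one prefers to avoid the citation, the alternative is to expand both sides directly in the coordinates of $N$ using the second order chain rule for $\phi_*$ together with the local formulas for $d$; the two expansions then coincide term by term, with the Hessian $\partial^2\phi^\alpha/\partial y^a\partial y^b$ produced on the left by $d^2v^\alpha$ and on the right by differentiating $(\theta_\alpha\circ\phi)\,\partial\phi^\alpha/\partial y^a$.
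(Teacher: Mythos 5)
Your proof is correct and follows essentially the same route as the paper: both work in an adapted chart and use the verticality of $\theta$ and the fact that $d\theta\in\mathfrak{V}^{*}E$ to erase the projections $\mathbf{v}$ on both sides, reducing the claim to the classical commutation of $d$ with the pull-back by $\phi$. The only difference is cosmetic: you cite Proposition 1.8 of \cite{emery2} for that classical step (the paper itself presents the lemma as a version of it), while the paper reproves it by the explicit product-rule computation in coordinates.
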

\begin{proof}
We first adopt the coordinates (\ref{coordinates}). Since $\theta$ is a vertical form, follwos that $(\mathbf{v}\phi)^{*}\theta = \phi^{*}\theta$. It is direct that $(\phi^{*}\theta) = (\theta^{\alpha} \circ \phi) d(v^{\alpha} \circ \phi)$. Applying the operator $d$ at $(\phi^{*}\theta)$ we compute
\begin{eqnarray*}
d(\phi^{*}\theta) & = & d((\theta^{\alpha}\circ \phi)d(v^{\alpha}\circ \phi))\\
& = & d(\theta^{\alpha} \circ \phi) d(v^{\alpha} \circ \phi) + (\theta^{\alpha} \circ \phi) d^{2}(v^{\alpha} \circ \phi) \\
& = & \phi^{*}d\theta^{\alpha}\phi^{*}dv^{\alpha} + (\theta^{\alpha} \circ \phi)\phi^{*}d^{2}v^{\alpha}\\
& = & \phi^{*}(d\theta^{\alpha}dv^{\alpha} + \theta^{\alpha}d^{2}v^{\alpha})
\end{eqnarray*}
Now the differential of $\theta^{\alpha}$ is given by
\[
d\theta^{\alpha} = \frac{\partial\theta^{\alpha}}{\partial x_{i}}dx^{i} + \frac{\partial\theta^{\alpha}}{\partial v_{\beta}}dv^{\beta}.
\]
We thus obtain
\[
d(\phi^{*}\theta)  =  \phi^{*}(\frac{\partial\theta^{\alpha}}{\partial x_{i}}dx^{i}dv^{\alpha}+ \frac{\partial \theta^{\alpha}}{\partial v^{\beta}}dv^{\beta}dv^{\alpha} + \theta^{\alpha} d^{2}v^{\alpha}).
\]
Since the right side, in coordinates, is $\phi^{*}d\theta$ and $\phi^{*}d\theta = (\mathbf{v}\phi)^{*}d\theta$, because $d\theta \in \mathfrak{V}^{*}E$, we conclude that
\[
d((\mathbf{v}\phi)^{*}\theta) = (\mathbf{v}\phi)^{*}d\theta.
\]
\qed
\end{proof}

The result of Lemma above for any first order form is responsible to the It\^o formula for Stratonovich integral (see for example \cite{emery2} or \cite{meyer}). Following the same idea, Lemma above was constructed specifically to show

\begin{theorem}
Let $E, M$ be differential manifolds such that there is a smooth submersion $\pi:E\rightarrow M$, $\phi:N \rightarrow
E$ a smooth map and $\theta$ a vertical form on $E$. Then
\begin{equation}\label{itostratonovichformula}
\int_{0}^{t} \theta \delta^{v}\phi(X_s) = \int_{0}^{t} (\mathbf{v}\phi^{*}\theta) \delta X_s.
\end{equation}
\end{theorem}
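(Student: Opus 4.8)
The plan is to reduce the identity~(\ref{itostratonovichformula}) to the classical It\^o–Stratonovich transfer formula combined with Lemma~\ref{commuteverticalform}. First I would unwind the definitions: by definition of the vertical Stratonovich integral, the left-hand side is $\int_{0}^{t} d\theta\,(\mathbf{v}\,d^{2}\phi(X_s))$, where $d\theta \in \mathfrak{V}^{*}E$ is the second order form obtained from $\theta$ via the operator $d:T^{*}E\to\tau^{*}E$. The key analytic fact I would invoke is the naturality of the second order push-forward: for a smooth map $\phi:N\to E$ and a semimartingale $X$ in $N$, one has $d^{2}\phi(X_s)=\phi_{*}(d^{2}X_s)$ as a second order stochastic differential, so that for any second order form $\Xi$ on $E$ one gets $\int_{0}^{t}\Xi\,d^{2}\phi(X_s)=\int_{0}^{t}(\phi^{*}\Xi)\,d^{2}X_s$. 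This is exactly the content that underlies the usual geometric Stratonovich formula (as in~\cite{emery2} or~\cite{meyer}), and it is legitimate to use here because everything has been set up so that the vertical integrals are genuine instances of the second order integral~(\ref{secondintegral}).

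The main body of the argument then proceeds as follows. Using a stopping-time/localization argument I may assume that $X$ stays in a chart $(U,x_1,\ldots,x_m,v_1,\ldots,v_k)$ of $E$ satisfying~(\ref{coordinates}), pulled back appropriately to $N$. On the left, I write $\int_{0}^{t}\theta\,\delta^{v}\phi(X_s)=\int_{0}^{t}d\theta\,(\mathbf{v}\,d^{2}\phi(X_s))$. Because $d\theta\in\mathfrak{V}^{*}E$, the projection $\mathbf{v}$ is harmless in the pairing: $d\theta\,(\mathbf{v}\,d^{2}\phi(X_s))=d\theta\,(d^{2}\phi(X_s))$, and by naturality this equals $(\phi^{*}d\theta)\,(d^{2}X_s)$. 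Now Lemma~\ref{commuteverticalform} gives $\phi^{*}d\theta=(\mathbf{v}\phi)^{*}d\theta=d((\mathbf{v}\phi)^{*}\theta)=d(\phi^{*}\theta)$, where the last equality again uses that $\theta$ is vertical so $(\mathbf{v}\phi)^{*}\theta=\phi^{*}\theta$. Hence the left-hand side equals $\int_{0}^{t}d(\phi^{*}\theta)\,(d^{2}X_s)$, which is by definition the (ordinary) Stratonovich integral $\int_{0}^{t}(\mathbf{v}\phi^{*}\theta)\,\delta X_s$ of the one-form $\phi^{*}\theta=\mathbf{v}\phi^{*}\theta$ on $N$, as claimed.

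I would present this in coordinates to make the bookkeeping transparent: with $\theta=\theta_{\alpha}dv^{\alpha}$, both sides reduce to $\int_{0}^{t}(\theta_{\alpha}\circ\phi(X_s))\,d(v^{\alpha}\circ\phi)(X_s) + \tfrac12\int_{0}^{t}D(\theta_{\alpha}\circ\phi)(X_s)\,d[v^{\alpha}\circ\phi,\cdot]$ type expressions, and matching the drift terms is exactly the commutation $d\circ\phi^{*}=\phi^{*}\circ d$ on vertical forms proved in the Lemma. The one subtlety worth flagging is the interplay between the vertical projection $\mathbf{v}$ on $\tau E$ and the push-forward $\phi_{*}$: a priori $\phi_{*}$ need not map $\mathfrak{V}N$-type objects into $\mathfrak{V}E$, so one cannot naively commute $\mathbf{v}$ past $\phi_{*}$. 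The point that saves the argument — and which I would make explicit — is that we never need to; since $d\theta$ already lies in $\mathfrak{V}^{*}E$, inserting $\mathbf{v}d^{2}\phi(X_s)$ versus $d^{2}\phi(X_s)$ yields the same number, so the projection on the $N$-side simply does not appear, and the identity collapses to the ordinary naturality of second order integration together with Lemma~\ref{commuteverticalform}. That verification that $\mathbf{v}$ can be dropped in the pairing is the step I expect to require the most care, but it is essentially immediate from the coordinate expression of $d\theta$.
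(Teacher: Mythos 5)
Your proof is correct and takes essentially the same route as the paper's: both unwind the definition of the vertical Stratonovich integral, use the naturality of the second order push-forward $d^{2}\phi(X)=\phi_{*}d^{2}X$ (with the verticality of $d\theta$ rendering the projection $\mathbf{v}$ harmless in the pairing, a step the paper simply absorbs into the notation $(\mathbf{v}\phi)^{*}$), apply Lemma \ref{commuteverticalform}, and conclude by the definition of the ordinary Stratonovich integral. Your explicit coordinate check that $\mathbf{v}$ can be dropped is a useful clarification of that implicit step, not a different argument.
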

\begin{proof}
Let $X_t$ be a semimartingale in $N$ and $\theta$ a vertical form on $E$. By definition of the vertical Stratonovich integral,
\begin{eqnarray*}
\int_{0}^{t} \theta \delta^{v}\phi(X_{s})
& = & \int_{0}^{t} d\theta \mathbf{v} d^{2}\phi(X_{s}) = \int_{0}^{t} (\mathbf{v}\phi)^{*}d\theta d^{2}X_{s} =  \int_{0}^{t} d((\mathbf{v}\phi)^{*}\theta) d^{2}X_{s}\\
& = & \int_{0}^{t} (\mathbf{v}\phi)^{*}\theta \delta X_{s},
\end{eqnarray*}
where we used Lemma \ref{commuteverticalform} in the third equality.\qed
\end{proof}

Beyond the It\^o formula for Stratonovich integral the stochastic calculus in manifolds has another formula for transformation between manifolds: the geometric It\^o formula (see for example \cite{catuogno2}). Our next purpose is to construct the geometric It\^o formula for the vertical It\^o integral. We begin introducing the second fundamental form, tension field and vertical harmonic map.

\begin{definition}\label{defsecondformvertical}
Let $E, M$ be differential manifolds such that there is a smooth submersion $\pi:E\rightarrow M$ and $\phi:N \rightarrow E$ a smooth map. Suppose that $E$ and $N$ are equipped with symmetric connections $\nabla^{E}$ and $\nabla^{N}$, respectively. Furthermore, denote the vertical connection by $\Gamma^{v}$. The section $\alpha^{v}_{\phi}$ of $\mathfrak{V}^{*}E \otimes \phi^{*}VE$ is given by
\begin{equation}\label{secondformvertical}
 \alpha^{v}_{\phi} = \Gamma^{v} \mathbf{v}\phi_{*} - \mathbf{v}\phi_{*} \Gamma^{N}.
\end{equation}
The vertical second  fundamental form of $\phi$, $\beta_{\phi}^{v}$, is the unique section of \linebreak $ (TM \odot TM)^*\otimes \phi^{*}VE$ such that $\alpha_{\phi}^{v} = \beta^{v}_{\phi}\circ Q$. The tension field of $\beta^{v}_{\phi}$ is
\[
 \tau_{\phi}^{v} = \rm{tr} \beta^{v}_{\phi}.
\]
We call $\phi$ a vertical harmonic map if $\tau_{\phi}^{v}=0$.
\end{definition}

The following linear algebra lemma shows that $\beta_{F}^{v}$ is well defined.

\begin{lema}\label{lemma1}
Let $\alpha^{v}_{\phi}$ be  a section of $\mathfrak{V}^*E \otimes \phi^{*}VE$ defined by (\ref{secondformvertical}). Then there exists an unique section $\beta^{v}_{\phi}$ of $(TM\odot TM)^*\otimes \phi^{*}VE$ such that $\alpha_{\phi}^{v}=\beta_{\phi}^{v} \circ Q$.
\end{lema}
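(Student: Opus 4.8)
The plan is to prove the statement by the standard ``quotient through $Q$'' argument, adapted to the vertical setting. The key point is that the operator $Q_p: \tau_p E \to T_pE \odot T_pE$ is surjective with kernel exactly $T_pE$, so any linear map out of $\tau_p E$ that kills $T_pE$ factors uniquely through $T_pE \odot T_pE$; here we want the analogous statement for the restriction of $Q$ to $\mathfrak{V}_pE$, with target $T_p(\pi^{-1}(\pi(p))) \odot T_p(\pi^{-1}(\pi(p)))$, i.e. $V_pE \odot V_pE$. So the first step is to identify, in the coordinates (\ref{coordinates}), the restriction $Q|_{\mathfrak{V}E}$: for $L = a_{\alpha\beta}D_{\alpha\beta} + a_{\alpha j}D_{\alpha j} + a_\alpha D_\alpha \in \mathfrak{V}_pE$ one has $Q_p(L) = a_{\alpha\beta}\, D_\alpha \odot D_\beta$, since the square operator only sees the pure second-order part and the mixed terms $D_{\alpha j}$ contribute $D_\alpha \odot \bar D_j$, which\,---\,wait, this is the subtlety\,---\,must be examined: $Q(D_{\alpha j}) = D_\alpha \odot \bar D_j$, which is \emph{not} in $V_pE \odot V_pE$. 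I would therefore note that $\beta^v_\phi$ is required to be valued in $(TM \odot TM)^*$ composed appropriately, and check that in fact $\alpha^v_\phi$ annihilates the mixed directions so that only the $D_\alpha \odot D_\beta$ components matter; concretely, $\alpha^v_\phi = \Gamma^v \mathbf{v}\phi_* - \mathbf{v}\phi_*\Gamma^N$ maps into $\phi^*VE$ and, read as a bilinear form via $Q$, its value depends only on $a_{\alpha\beta}$.

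Second, I would establish that $\alpha^v_\phi$ kills $T_pN \subset \tau_pN$ pulled into the picture\,---\,more precisely, that $\alpha^v_\phi(L)$ depends only on $Q(L)$. Since both $\Gamma^v$ and $\Gamma^N$ restrict to the identity on first-order vectors ($\Gamma^E|_{TE} = \mathrm{Id}$ by Meyer's result, hence $\Gamma^v|_{VE} = \mathbf{v}\,\mathrm{Id}$, and $\Gamma^N|_{TN} = \mathrm{Id}$), for $v \in T_pN$ we get $\alpha^v_\phi(v) = \mathbf{v}\Gamma^E \mathbf{v}\phi_* v - \mathbf{v}\phi_* v = \mathbf{v}(\mathbf{v}\phi_* v) - \mathbf{v}\phi_* v = 0$. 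So $\alpha^v_\phi$ vanishes on $TN \cap \mathfrak{V}$-directions, i.e. on the image of first-order vectors; combined with surjectivity of $Q|_{\mathfrak{V}}$ onto $V_pE \odot V_pE$ (which I verify in coordinates: given any symmetric $c_{\alpha\beta}$, the element $c_{\alpha\beta}D_{\alpha\beta}$ lies in $\mathfrak{V}_pE$ and maps to $c_{\alpha\beta}D_\alpha\odot D_\beta$), this yields a well-defined linear map $\beta^v_\phi: V_pE \odot V_pE \to \phi^*V_pE$ with $\alpha^v_\phi = \beta^v_\phi \circ Q$, and uniqueness is immediate from surjectivity of $Q|_{\mathfrak{V}}$. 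One then checks smoothness of $p \mapsto \beta^v_\phi(p)$, which follows since the construction is pointwise-linear and the coordinate expressions of $\Gamma^v$, $\phi_*$ are smooth.

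The main obstacle I expect is the bookkeeping around the mixed second-order terms $D_{\alpha j}$: one must be careful that $\mathfrak{V}_pE$ contains $D_{\alpha j}$ but $Q(D_{\alpha j}) = D_\alpha \odot \bar D_j \notin V_pE \odot V_pE$, so $Q|_{\mathfrak{V}E}$ is \emph{not} simply a map onto $V_pE \odot V_pE$ unless one either (a) shows $\alpha^v_\phi$ also annihilates the $D_{\alpha j}$ directions, or (b) reinterprets the target $(TM \odot TM)^* \otimes \phi^*VE$ via the submersion $\pi$ so that $D_\alpha \odot \bar D_j$ is identified suitably. I would resolve this by computing, in coordinates, that $\alpha^v_\phi$ evaluated on $D_{\alpha j}$ picks up only Christoffel-type terms that are already accounted for\,---\,or, more cleanly, by invoking the fact that $\mathbf{v}\phi_*$ and $\Gamma^v$ are built so that their difference, being $\phi^*VE$-valued and tensorial, can only depend on the genuinely second-order ``vertical'' part $a_{\alpha\beta}$\,---\,so that $\beta^v_\phi$ factors as claimed through $TM \odot TM$ pulled back appropriately. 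Once that compatibility is pinned down, the rest is the routine linear-algebra quotient argument exactly as in the classical case treated in \cite{emery2} and \cite{catuogno2}.
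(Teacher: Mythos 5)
The core of your second paragraph is exactly the paper's argument: $\alpha^{v}_{\phi}$ annihilates first-order vectors because $\Gamma^{E}$ (hence $\Gamma^{v}$ on $VE$) and $\Gamma^{N}$ restrict to the identity, so $\alpha^{v}_{\phi}(v)=\mathbf{v}(\mathbf{v}\phi_{*}v)-\mathbf{v}\phi_{*}v=0$, and then one factors through $Q$ by the first isomorphism theorem, uniqueness coming from surjectivity of $Q$. The paper compresses this to one line: $\mathrm{Ker}\,Q=$ the first-order tangent space $\subset\mathrm{Ker}\,\alpha^{v}_{\phi}$.

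However, your write-up has a genuine structural confusion that you never resolve: you place $\alpha^{v}_{\phi}$ and $Q$ on the wrong space. From its definition, $\alpha^{v}_{\phi}=\Gamma^{v}\mathbf{v}\phi_{*}-\mathbf{v}\phi_{*}\Gamma^{N}$ is a $\phi^{*}VE$-valued form on $\tau N$, the second-order tangent bundle of the \emph{source} manifold $N$ (the ``$\mathfrak{V}^{*}E$'' in the statement is loose notation, just as the ``$TM$'' should read $TN$), and the $Q$ in $\alpha^{v}_{\phi}=\beta^{v}_{\phi}\circ Q$ is the squaring operator $Q:\tau_{x}N\rightarrow T_{x}N\odot T_{x}N$ of $N$, whose kernel is $T_{x}N$. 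It is \emph{not} the restriction of $E$'s squaring operator to $\mathfrak{V}E$. Your entire worry about the mixed directions $D_{\alpha j}$, with $Q(D_{\alpha j})=D_{\alpha}\odot\bar{D}_{j}\notin V_{p}E\odot V_{p}E$, is an artifact of this misreading, and you leave it explicitly unresolved (``I would resolve this by computing\dots or, more cleanly, by invoking\dots''), so the proof as written does not close. Worse, the map you end up constructing, $\beta^{v}_{\phi}:V_{p}E\odot V_{p}E\rightarrow\phi^{*}V_{p}E$, is not the object the lemma asserts and not the object Theorem \ref{geometricitoformula} needs, where $\beta^{v*}_{\phi}\theta$ is contracted with $(dX,dX)$ for an $N$-valued semimartingale $X$, i.e.\ $\beta^{v}_{\phi}$ must be a bilinear form on $T_{x}N$. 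Once the domain is correctly taken to be $\tau N$, the $D_{\alpha j}$ obstacle disappears entirely and your paragraph-two computation plus the quotient argument gives the lemma exactly as in the paper.
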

\begin{proof}
Since $\mathrm{Ker}\,Q =TM \subset \mathrm{Ker}\,\alpha_{\phi}^{v}$, the lemma follows from the first isomorphism theorem (see \cite{Rot} pp 67). \qed
\end{proof}

The following lemma is necessary in the main Theorem of this section.

\begin{lema}\label{lemma2}
Under assumptions in Definition \ref{defsecondformvertical}, for each vertical form $\theta$ on $E$,
\[
\int_{0}^{t} \alpha^{v*}_{\sigma} \theta~ d_2X_s = \frac{1}{2}\int_{0}^{t} \beta^{v*}_{\sigma} \theta (dX,dX)_s.
\]
\end{lema}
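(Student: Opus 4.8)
The plan is to reduce this identity to the relation $\alpha^{v}_{\sigma}=\beta^{v}_{\sigma}\circ Q$ between the two fundamental forms, together with the local description of the quadratic integral $\int_{0}^{t}b\,(dX,dX)_s$ in terms of $d[X^{i},X^{j}]$ and the fact that $Q$ acts on a second order vector $d^{2}X_{s}$ by extracting exactly the quadratic‐variation part. First I would work in the coordinates (\ref{coordinates}), so that $\alpha^{v*}_{\sigma}\theta$ is a second order form lying in $\mathfrak{V}^{*}E$; since $\alpha^{v}_{\sigma}=\beta^{v}_{\sigma}\circ Q$ and $Q$ annihilates $TM$, the second order form $\alpha^{v*}_{\sigma}\theta$ has no genuine ``$d^{2}$'' part — it is purely of the form $(\beta^{v}_{\sigma}{}^{*}\theta)_{ij}\,dx^{i}\cdot dx^{j}$, i.e. a symmetric bilinear form on $TM$ pulled back. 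Then by the definition (\ref{secondintegral}) of the integral of a second order form,
\[
\int_{0}^{t}\alpha^{v*}_{\sigma}\theta\,d^{2}X_s
=\int_{0}^{t}(\beta^{v*}_{\sigma}\theta)_{ij}(X_s)\,d[X^{i},X^{j}]_s
=\frac{1}{2}\int_{0}^{t}\beta^{v*}_{\sigma}\theta\,(dX,dX)_s
\]
up to the factor $\tfrac12$, which is precisely the discrepancy between the conventions in (\ref{secondintegral}) (no $\tfrac12$ on the bracket term) and in the quadratic integral (again no $\tfrac12$, but $Q$ carries one) — this bookkeeping is what produces the stated $\tfrac12$.

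The key steps, in order, would be: (i) fix the chart (\ref{coordinates}) and write $\theta=\theta_{\alpha}dv^{\alpha}$; (ii) compute $\alpha^{v*}_{\sigma}\theta$ explicitly as a section of $\mathfrak{V}^{*}E$, observing from $\alpha^{v}_{\sigma}=\Gamma^{v}\mathbf{v}\sigma_{*}-\mathbf{v}\sigma_{*}\Gamma^{N}$ that its ``$D_{\alpha}$-dual'' (the $d^{2}$) component vanishes because $\sigma$ is a section, so $\mathbf{v}\sigma_{*}$ composed with the first-order part of both terms cancels, leaving only a tensorial (second-order-form) quadratic part; (iii) invoke Lemma \ref{lemma1} to identify that quadratic part with $\beta^{v*}_{\sigma}\theta\circ Q$; (iv) apply the definition of $\int\Theta\,d^{2}X$ from (\ref{secondintegral}) and of $\int b\,(dX,dX)$, and match coefficients of $d[X^{i},X^{j}]_s$ on both sides, which yields the factor $\tfrac12$. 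One should also note that replacing $d^{2}X_s$ here by $\mathbf{v}d^{2}X_s$ changes nothing, since $\alpha^{v*}_{\sigma}\theta\in\mathfrak{V}^{*}E$ pairs identically with $d^{2}X_s$ and with its vertical projection.

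The main obstacle I anticipate is step (ii): making precise why $\alpha^{v*}_{\sigma}\theta$ has no $d^{2}v^{\alpha}$ (or $d^{2}x^{i}$) component — equivalently, why $\mathrm{Ker}\,Q=TM\subset\mathrm{Ker}\,\alpha^{v}_{\sigma}$ translates into the dual statement that $\alpha^{v*}_{\sigma}\theta$ is supported on the image of $Q^{*}$, i.e. on symmetric two-tensors. This is exactly the content of Lemma \ref{lemma1} reread at the level of the adjoint maps, but one must be careful that the pullback $\alpha^{v*}_{\sigma}$ is taken along the semimartingale $X$ in $N$ and that ``$d_{2}X_s$'' in the statement means the second order differential $d^{2}X_s$ of $X$ in $N$. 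Once the tensorial identity $\alpha^{v*}_{\sigma}\theta=(\beta^{v*}_{\sigma}\theta)\circ Q$ is in hand at each point $X_s$, integrating against $d^{2}X_s$ and comparing with the definition of the quadratic integral is a one-line coefficient computation, and the equality follows. I would close by remarking that this is the vertical analogue of the classical identity used to derive the geometric It\^o formula (cf.\ \cite{catuogno2}), so no new analytic input beyond (\ref{secondintegral}) and Lemma \ref{lemma1} is needed.
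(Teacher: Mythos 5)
Your proposal is correct and follows essentially the paper's own route: both arguments reduce the claim to dualizing $\alpha^{v}_{\sigma}=\beta^{v}_{\sigma}\circ Q$, so that $\alpha^{v*}_{\sigma}\theta=Q^{*}\beta^{v*}_{\sigma}\theta$, the paper then simply citing Proposition 6.31 of \cite{emery1} for $\int_{0}^{t}Q^{*}b\,d^{2}X_{s}=\frac{1}{2}\int_{0}^{t}b(dX,dX)_{s}$, whereas you verify that same identity by hand in the coordinates (\ref{coordinates}). The only point to tighten is your bookkeeping of the factor $\frac{1}{2}$: it comes from the Schwartz-principle pairing $d^{2}X=dX^{i}D_{i}+\frac{1}{2}d[X^{i},X^{j}]D_{ij}$ with a purely quadratic second order form (i.e. Emery's convention, as used in (\ref{verticalintegral})), not from the display (\ref{secondintegral}) as literally printed.
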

\begin{proof}
By definition of $\beta_{\phi}^v$, for each vertical form $\theta$ we have
\[
\frac{1}{2}\int_{0}^{t} \beta^{v*}_{\sigma} \theta (dX,dX)_s=\int_{0}^{t} Q^{*}\beta^{v*}_{\sigma} \theta~
d^{2}X_s = \int_{0}^{t} (\beta_{\sigma}^v \circ Q)^{*} \theta~ d^{2}X_s = \int_{0}^{t} \alpha^{v*}_{\sigma}\theta~ d^{2}X_s.
\]
The first equality follows from Proposition 6.31 in \cite{emery1}.\qed
\end{proof}

Let us make a clear observation. Lemma \ref{lemma1} assures an existence $\beta_{\phi}^{v}$, in the other hand, Lemma
\ref{lemma2} shows as related the integral of $\alpha_{\phi}^{v}$ and $\beta_{\phi}^{v}$. Both results are vital for
the construction of the following geometric It\^o formula.

\begin{theorem}\label{geometricitoformula}
Let $E, M,N$ be differential manifolds such that there is a smooth submersion $\pi:E\rightarrow M$ and $\phi:N \rightarrow E$ a smooth map. Suppose that $E$ and $N$ are equipped with symmetric connections $\nabla^{E}$ and $\nabla^{N}$, respectively. Furthermore, denote the vertical connection by $\nabla^{v}$. If $X$ is an $N$-valued semimartingale and $\theta$ is a vertical form on $E$,
then
\[
\int_{0}^{t} \theta d^{v}\phi(X_s)=\int_{0}^{t} \phi^*\theta d^{N}X_s+\frac{1}{2}\int_{0}^{t} \beta_\phi^{v*}\theta(dX,dX)_s.
\]
\end{theorem}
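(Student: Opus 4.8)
The plan is to prove the geometric It\^o formula by the same strategy used for the classical one in \cite{catuogno2}: expand the left-hand side using the definition of the vertical It\^o integral, use the Schwartz-morphism identity relating $\phi_*$ to the connections, and then recognize the ``correction'' term as the integral of $\alpha^v_\phi$, which Lemma \ref{lemma2} converts into the quadratic integral of $\beta^v_\phi$. First I would write, by definition, $\int_0^t \theta\, d^v\phi(X_s) = \int_0^t \Gamma^{v*}\theta\, \mathbf{v}\,d^2\phi(X_s)$, and note that pushing forward commutes with the vertical projection in the sense that $\mathbf{v}\,d^2\phi(X_s) = \mathbf{v}\phi_*(d^2X_s)$ (this is where the coordinates (\ref{coordinates}) and the fact that $\mathbf{v}\phi_*$ lands in $\mathfrak{V}E$ are used). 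So the left side equals $\int_0^t (\mathbf{v}\phi_*)^*\Gamma^{v*}\theta\, d^2X_s = \int_0^t (\Gamma^v \mathbf{v}\phi_*)^*\theta\, d^2X_s$.

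Next I would insert the definition (\ref{secondformvertical}) of $\alpha^v_\phi$, namely $\Gamma^v\mathbf{v}\phi_* = \alpha^v_\phi + \mathbf{v}\phi_*\Gamma^N$. By linearity of the second-order integral this splits the left-hand side into
\[
\int_0^t (\mathbf{v}\phi_*\Gamma^N)^*\theta\, d^2X_s + \int_0^t \alpha^{v*}_\phi\theta\, d^2X_s .
\]
For the first term I would observe that $(\mathbf{v}\phi_*\Gamma^N)^*\theta = \Gamma^{N*}(\phi^*\theta)$ — here one uses that $\theta$ is vertical so that $(\mathbf{v}\phi_*)^*\theta = \phi_*^*\theta = \phi^*\theta$ as a first-order form pulled back to $N$ — and hence this term is exactly $\int_0^t \Gamma^{N*}(\phi^*\theta)\, d^2X_s = \int_0^t \phi^*\theta\, d^NX_s$, the It\^o integral on $N$. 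For the second term I would apply Lemma \ref{lemma2} directly: $\int_0^t \alpha^{v*}_\phi\theta\, d^2X_s = \tfrac12\int_0^t \beta^{v*}_\phi\theta\,(dX,dX)_s$. Adding the two pieces gives the claimed formula.

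The main obstacle I anticipate is the bookkeeping needed to justify $\mathbf{v}\,d^2\phi(X_s) = \mathbf{v}\phi_*(d^2X_s)$ and, relatedly, the identity $(\mathbf{v}\phi_*)^*\Gamma^{v*}\theta = (\Gamma^v\mathbf{v}\phi_*)^*\theta$ at the level of second-order forms. One has to be careful that the vertical projection $\mathbf{v}:\tau E\to\mathfrak{V}E$ is only defined using the privileged coordinates (\ref{coordinates}) and that $\Gamma^v$ is only defined on $\mathfrak{V}E$; thus every composition must be checked to keep its arguments inside $\mathfrak{V}E$. A clean way to handle this is to fix the coordinates (\ref{coordinates}) on $E$ once and for all, work in those coordinates together with arbitrary coordinates on $N$, and verify each identity as an equality of local expressions in $dX^i$, $d[X^i,X^j]$ — exactly as in the proof of Lemma \ref{commuteverticalform}. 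Once these compatibility identities are in place, the remainder is the short linear-algebra manipulation above together with the already-proven Lemma \ref{lemma2}.
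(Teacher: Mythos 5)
Your proposal is correct and follows essentially the same route as the paper: both expand the vertical It\^o integral via $\Gamma^{v*}\theta$ applied to $\mathbf{v}\,d^{2}\phi(X)=\mathbf{v}\phi_{*}d^{2}X$, identify the correction term with $\alpha^{v}_{\phi}$ from its defining relation (\ref{secondformvertical}) (your splitting $\Gamma^{v}\mathbf{v}\phi_{*}=\alpha^{v}_{\phi}+\mathbf{v}\phi_{*}\Gamma^{N}$ is just the paper's add-and-subtract of $\Gamma^{N*}(\phi^{*}\theta)$ written the other way), and then invoke Lemma \ref{lemma2} to pass to the quadratic integral of $\beta^{v}_{\phi}$. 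Your extra remarks about checking the compatibility identities in the privileged coordinates (\ref{coordinates}) are a sensible elaboration of steps the paper uses without comment.
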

\begin{proof} We calculate
\[
\begin{array}{rcl}
\int_{0}^{t} \theta d^{v}\phi(X_s) & = & \int_{0}^{t} (\Gamma^{v*} \theta) \mathbf{v} d^2\phi(X_s)\\
& = &   \int_{0}^{t} (\Gamma^{v*} \theta) \mathbf{v}\phi_* d^2(X_s) = \int_{0}^{t} \mathbf{v}\phi^*(\Gamma^{v*}\theta) d^2X_s \\
& = & \int_{0}^{t} \mathbf{v}\phi^*(\Gamma^{v*}\theta) d^2X_s + \int_{0}^{t} \Gamma^{N*}(\phi^*
\theta) d^2X_s - \int_{0}^{t} \Gamma^{N*}(\phi^* \theta) d^2X_s \\
& = & \int_{0}^{t} \Gamma^{N*}(\phi^* \theta) d^2X_s + \int \Big(
\mathbf{v}\phi^*(\Gamma^{v*} \theta) - \Gamma^{N*}(\phi^* \theta) \Big) d^2X_s \\
& = & \int_{0}^{t} \phi^* \theta d^{N}X_s + \int \alpha^{v*}_\phi \theta d_2X_s \\
& = & \int_{0}^{t} \phi^* \theta d^{N}X_s + \frac{1}{2}\int_{0}^{t} \beta^{v*}_\phi \theta (dX,dX)_s,
\end{array}
\]
where we use Lemma \ref{lemma2} in the last equality.
\end{proof}

\begin{corollary}
Under assumptions of Theorem \ref{geometricitoformula}, furthermore, $(N,g)$ is a Riemannian and $\nabla^{g}$ is the
Levi-Civita connection on $N$. If $B$ is a $g$-Brownian motion in $N$ and $\theta$ be a vertical form on $E$, then
\begin{equation}\label{harmonic}
\int_{0}^{t} \theta d^{v}\phi(B_s)=\int_{0}^{t} \phi^*\theta d^{g}B_s+\frac{1}{2}\int_{0}^{t} \tau_\phi^{v*}\theta(B_s)ds.
\end{equation}
\end{corollary}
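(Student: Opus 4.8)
The plan is to derive this corollary directly from Theorem \ref{geometricitoformula} by specializing the semimartingale $X$ to a $g$-Brownian motion $B$ and simplifying the quadratic term using the defining property of Brownian motion. First I would invoke Theorem \ref{geometricitoformula} verbatim, which gives
\[
\int_{0}^{t} \theta d^{v}\phi(B_s)=\int_{0}^{t} \phi^*\theta d^{g}B_s+\frac{1}{2}\int_{0}^{t} \beta_\phi^{v*}\theta(dB,dB)_s,
\]
since $\nabla^{g}$ is a symmetric connection and $B$ is in particular a semimartingale in $N$. So the only work is to show that the quadratic integral $\int_{0}^{t}\beta_\phi^{v*}\theta(dB,dB)_s$ collapses to $\int_{0}^{t}\tau_\phi^{v*}\theta(B_s)\,ds$.

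The key observation is that $\beta_\phi^{v*}\theta$ is, for each fixed $s$, a section of $TN\odot TN$ evaluated along $B$ — that is, a genuine (first order, symmetric $2$-covariant) tensor field on $N$, because $\theta$ is a fixed vertical form on $E$, $\phi$ is a fixed smooth map, and $\beta_\phi^{v}$ is a section of $(TN\odot TN)^*\otimes \phi^*VE$ as established in Lemma \ref{lemma1}. Hence we may apply the defining property \eqref{Brownian} of a $g$-Brownian motion with the choice $b=\beta_\phi^{v*}\theta$, which yields
\[
\int_{0}^{t}\beta_\phi^{v*}\theta(dB,dB)_s=\int_{0}^{t}\mathrm{tr}\,(\beta_\phi^{v*}\theta)_{B_s}\,ds.
\]
It then remains only to identify $\mathrm{tr}\,(\beta_\phi^{v*}\theta)$ with $\tau_\phi^{v*}\theta$. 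This is immediate from the definition $\tau_\phi^{v}=\mathrm{tr}\,\beta_\phi^{v}$ in Definition \ref{defsecondformvertical}, since trace (a contraction over the $TN\odot TN$ slot) commutes with the pullback by $\theta$ acting on the $\phi^*VE$ slot; concretely, in a $g$-orthonormal frame $\{e_a\}$ of $T_{B_s}N$ one has $\mathrm{tr}\,(\beta_\phi^{v*}\theta)_{B_s}=\sum_a (\beta_\phi^{v*}\theta)(e_a,e_a)=\sum_a \theta\big(\beta_\phi^{v}(e_a,e_a)\big)=\theta(\tau_\phi^{v})=\tau_\phi^{v*}\theta$ at $B_s$. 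Substituting back gives \eqref{harmonic}.

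I do not anticipate a serious obstacle here — the corollary is essentially a one-line consequence of the theorem plus the definition of Brownian motion. The only point requiring a little care is the bookkeeping that the trace in \eqref{Brownian} (which produces the $ds$ integrand) is exactly the same operation as the trace defining $\tau_\phi^{v}$, so that the adjoint/pullback by $\theta$ can be pulled through the trace; this is the step I would write out explicitly in a frame as above, and it is entirely routine.
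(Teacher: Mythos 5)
Your proposal is correct and is exactly the argument the paper intends (the paper states the corollary without proof as an immediate consequence): specialize Theorem \ref{geometricitoformula} to $X=B$, apply the defining property \eqref{Brownian} of a $g$-Brownian motion to the symmetric tensor $b=\beta_\phi^{v*}\theta$, and identify $\mathrm{tr}\,(\beta_\phi^{v*}\theta)=\theta(\mathrm{tr}\,\beta_\phi^{v})=\tau_\phi^{v*}\theta$ from Definition \ref{defsecondformvertical}. Your frame computation making the last identification explicit is a harmless (and welcome) addition.
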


A link between Stochastic Analysis and Differential Geometry is a well known stochastic characterization of harmonic maps, see for example \cite{emery1} or \cite{emery2}. One can observe that until here we construct analogous tools to show a stochastic characterization for vertical harmonic maps.

\begin{prop}\label{verticalharmoniccharact}
Let $E, M$ be differential manifolds such that there is a smooth submersion $\pi:E\rightarrow M$. Suppose that $E$ is equipped with a symmetric connection $\nabla^{E}$ and $(N,g)$ is a Riemannian manifold. Denote the vertical connection by $\nabla^{v}$. A smooth map $\phi:N \rightarrow E$ is vertical harmonic map if and only if $\phi$ sends $g$-Brownian motions $B_t$ in vertical martingales $\phi(B_t)$.
\end{prop}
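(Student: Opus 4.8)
The plan is to read off the equivalence directly from the geometric It\^o formula in Theorem \ref{geometricitoformula}, specialized to the Brownian case as in the Corollary, equation (\ref{harmonic}). The key identity is: for any $g$-Brownian motion $B_t$ in $N$ and any vertical form $\theta$ on $E$,
\[
\int_{0}^{t} \theta d^{v}\phi(B_s)=\int_{0}^{t} \phi^*\theta d^{g}B_s+\frac{1}{2}\int_{0}^{t} \tau_\phi^{v*}\theta(B_s)\,ds.
\]
Since $B_t$ is a $g$-Brownian motion, the first term $\int_{0}^{t} \phi^*\theta\, d^{g}B_s$ is a real local martingale (this is exactly the defining property of a Brownian motion applied to the one-form $\phi^*\theta$ on $N$). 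Hence $\int_{0}^{t} \theta d^{v}\phi(B_s)$ is a real local martingale for every vertical $\theta$ — i.e. $\phi(B_t)$ is a vertical martingale — if and only if the remaining finite-variation term $\frac{1}{2}\int_{0}^{t} \tau_\phi^{v*}\theta(B_s)\,ds$ is itself a local martingale for every vertical $\theta$.

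First I would prove the "only if" direction: assume $\phi$ is vertical harmonic, so $\tau_\phi^{v}=0$ by Definition \ref{defsecondformvertical}; then the drift term vanishes identically and $\int_{0}^{t}\theta d^{v}\phi(B_s)=\int_{0}^{t}\phi^*\theta\, d^{g}B_s$ is a local martingale, so $\phi(B_t)$ is a vertical martingale. Next I would prove the "if" direction by contraposition: suppose $\tau_\phi^{v}\neq 0$ at some point $y_0\in N$. Then there is a vertical form $\theta$ on $E$ with $\tau_\phi^{v*}\theta(y_0)=\langle \tau_\phi^{v}(y_0),\theta\rangle\neq 0$, and by continuity this holds on a neighborhood; shrinking if necessary we may assume the sign is constant there. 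Starting a $g$-Brownian motion $B$ at $y_0$ (and stopping it on exit from that neighborhood), the process $t\mapsto \frac{1}{2}\int_0^t \tau_\phi^{v*}\theta(B_s)\,ds$ is a nonzero continuous finite-variation process, which cannot be a local martingale unless it is constant — contradiction. Hence $\int_0^t\theta d^{v}\phi(B_s)$ fails to be a local martingale for this $\theta$, so $\phi(B_t)$ is not a vertical martingale.

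The one subtlety worth spelling out is the passage from "$\tau_\phi^{v}\neq 0$ somewhere" to "the drift is genuinely present": one must exhibit a vertical one-form $\theta$ pairing nontrivially with $\tau_\phi^{v}$ near $y_0$ and use the standard fact that Brownian motion started anywhere spends positive time near its start, so the integral $\int_0^t \tau_\phi^{v*}\theta(B_s)\,ds$ is not almost surely zero. The main obstacle — and it is a mild one — is making the local-versus-global point precise: a priori Theorem \ref{geometricitoformula} and the Corollary are stated for a map $\phi$ into $E$ and a globally defined vertical form, but the decomposition of $\int_0^t \theta d^{v}\phi(B_s)$ into local-martingale plus finite-variation parts is an intrinsic (semimartingale) statement, so one may localize freely by a stopping-time argument — exactly the device already invoked before Proposition \ref{equivalencemartingale}. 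Everything else is a direct invocation of (\ref{harmonic}) together with the uniqueness of the semimartingale decomposition.
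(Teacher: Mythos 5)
Your proposal is correct and follows essentially the same route as the paper: both rest on formula (\ref{harmonic}), the local-martingale property of $\int_0^t \phi^*\theta\, d^g B_s$, and the uniqueness of the semimartingale (Doob--Meyer) decomposition together with the arbitrariness of $B_t$ and $\theta$. Your write-up merely makes explicit the localization and the choice of $\theta$ and starting point in the converse direction, which the paper compresses into the phrase ``since $B_t$ and $\theta$ are arbitrary.''
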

\begin{proof}
Let $B_t$ be a $g$-Brownian motion in $N$ and $\theta$ a vertical form on $E$. By formula (\ref{harmonic}),
\[
\int_{0}^{t} \theta~ d^{v}\phi(B_{s}) = \int_{0}^{t} \phi^{*} \theta d^{g}B_{s} + \frac{1}{2}\int_{0}^{t} \tau_{\phi}^{v*}\theta(B_{s})ds.\\
\]
We observe that $\int_{0}^{t} \phi^{*} \theta~d^{g}B_{s}$ is a real local martingale. Since $B_t$ and $\theta$ are arbitrary, the Doob-Meyer decomposition assures that $\int_{0}^{t} \theta~ d^{v}\phi(B_{s})$ is a real local martingale if and only if $\tau_{\phi}^{v}$ vanishes. From the definitions of vertical martingales and vertical harmonic maps we conclude the proof. \qed
\end{proof}

\section{Harmonic section}

In this section, we work with object that motivate this study: harmonic section. Before going further, we give the environment for the study of the harmonic sections.

Let $E$ be a differential manifold and $(M,g)$ a Riemannian manifold such that there is a smooth submersion $\pi:E\rightarrow M$. Let $\nabla^{E}$ be a symmetric connection on $E$ such that $\pi$ has totally geodesics fibers property and denote by $\nabla^{g}$ the Levi-Civita connection on $M$. Let $VE=\mathrm{ker}\,(\pi_{*})$ be the vertical distribution and $HE$ a smooth distribution in $TE$ such that $TE= VE\oplus HE$. Let $\mathbf{v}: TE \rightarrow VE$ and $\mathbf{h}:TE \rightarrow HE$ be the vertical and horizontal projectors, respectively. Let \linebreak $H_{p} = (\pi_{p∗}|_{H_{p}E})^{-1} : T_{x}M \rightarrow H_{p}E$ be the horizontal lift, where $\pi(p)=x$. We observe that $H_{p}$ is an isomorphism for each $p \in E$. The submersion \linebreak $\pi:E \rightarrow M$ is called affine submersion with horizontal distribution if  \linebreak $\mathbf{h}\nabla^{E}_{H(X)}H(Y) = H(\nabla^{M}_{X}Y)$ (see
\cite{abe} for more details). A Riemmanian submersion is a classical example of affine submersion with horizontal distribution.

In this section, unless otherwise stated, we assume that $\pi:E \rightarrow M$ is an affine submersion with horizontal distribution.

Next we extend the definition given by C. M. Wood \cite{wood1} for harmonic sections.

\begin{definition}
A section $\sigma$ of $\pi$ is called a harmonic section if $\tau_{\sigma}^{v} =0$.
\end{definition}

An immediate consequence of Proposition \ref{verticalharmoniccharact} is the characterization of the harmonic sections in the following stochastic context.

\begin{theorem}\label{harmonicsection}
Let $E$ be a differential manifold and $(M,g)$ a Riemannian manifold such that there is a smooth submersion $\pi:E\rightarrow M$. Assume that $E$ is equipped with a symmetric connection $\nabla^{E}$ such that $\pi$ has totally geodesics fibers property. Then a section $\sigma$ of $\pi$ is harmonic section if and only if, for every $g$-Brownian motion $B_t$ in $M$, $\sigma(B_t)$ is a vertical martingale in $E$.
\end{theorem}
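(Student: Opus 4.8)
The plan is to deduce the statement directly from Proposition \ref{verticalharmoniccharact}, specialized to the case in which the source manifold is $(M,g)$ itself and the smooth map is the section $\sigma$.

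First I would fix the setup. Since $(M,g)$ is Riemannian, equip $M$ with its Levi-Civita connection $\nabla^{g}$, which is symmetric, so that $M$ plays the role of the manifold $N$ both in Definition \ref{defsecondformvertical} and in Proposition \ref{verticalharmoniccharact}. The section $\sigma:M\to E$ is a smooth map with $\pi\circ\sigma=\mathrm{id}_{M}$, hence $\pi_{*}\circ\sigma_{*}=\mathrm{id}$; in particular $\mathbf{h}\sigma_{*}X=H_{\sigma(x)}X$ and therefore $\mathbf{v}\sigma_{*}X=\sigma_{*}X-H_{\sigma(x)}X=\sigma^{v}_{*}X$ is precisely the vertical part of $\sigma_{*}$ used in Wood's definition. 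Consequently the quantity $\tau^{v}_{\sigma}$ in the definition of harmonic section is exactly the vertical tension field of the map $\sigma$ furnished by Definition \ref{defsecondformvertical}: one forms $\alpha^{v}_{\sigma}=\Gamma^{v}\mathbf{v}\sigma_{*}-\mathbf{v}\sigma_{*}\Gamma^{g}$, uses Lemma \ref{lemma1} to get $\beta^{v}_{\sigma}$ with $\alpha^{v}_{\sigma}=\beta^{v}_{\sigma}\circ Q$, and sets $\tau^{v}_{\sigma}=\mathrm{tr}\,\beta^{v}_{\sigma}$. (The standing assumption of this section that $\pi$ is an affine submersion with horizontal distribution, in particular with totally geodesic fibers, is what guarantees this $\tau^{v}_{\sigma}$ agrees with Wood's $\mathrm{tr}\,\nabla^{v}\sigma^{v}_{*}$, so that ``harmonic section'' means here what it means in \cite{wood1}.)

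Next I would invoke Proposition \ref{verticalharmoniccharact} with $N=M$, connection $\nabla^{g}$, and $\phi=\sigma$: it asserts that $\sigma$ is a vertical harmonic map if and only if $\sigma$ carries every $g$-Brownian motion $B_{t}$ in $M$ into a vertical martingale $\sigma(B_{t})$ in $E$. By the previous paragraph ``$\sigma$ is a vertical harmonic map'' is literally ``$\tau^{v}_{\sigma}=0$'', i.e. ``$\sigma$ is a harmonic section'', so chaining the two equivalences yields the theorem. Both implications then come for free from the machinery already in place: the forward direction uses the geometric Itô formula \eqref{harmonic} (equivalently Theorem \ref{geometricitoformula} and its corollary) to split $\int_{0}^{t}\theta\,d^{v}\sigma(B_{s})$ as a real local martingale plus the bounded-variation term $\tfrac12\int_{0}^{t}\tau^{v*}_{\sigma}\theta(B_{s})\,ds$, which vanishes when $\tau^{v}_{\sigma}=0$; the converse uses that through each point of $M$ there pass $g$-Brownian motions realizing every admissible ``speed'', so that the Doob--Meyer decomposition forces the drift term to vanish identically --- this is exactly the argument already carried out inside the proof of Proposition \ref{verticalharmoniccharact}.

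There is essentially no serious obstacle: the theorem is a corollary of tools already built. The one point that deserves a line of care is the identification in the second paragraph --- namely that for a \emph{section} $\sigma$ the map-theoretic vertical tension field of Definition \ref{defsecondformvertical} reduces to Wood's section-theoretic one --- which rests on the observation $\mathbf{v}\sigma_{*}=\sigma^{v}_{*}$ above together with the affine-submersion hypothesis to control the horizontal directions; everything else is a direct appeal to Proposition \ref{verticalharmoniccharact}.
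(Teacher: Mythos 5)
Your proposal is correct and takes essentially the same route as the paper: the paper states Theorem \ref{harmonicsection} as an immediate consequence of Proposition \ref{verticalharmoniccharact}, applied with $N=M$ endowed with the Levi-Civita connection $\nabla^{g}$ and $\phi=\sigma$, since the paper's (extended) definition of harmonic section is precisely $\tau^{v}_{\sigma}=0$, i.e.\ that $\sigma$ is a vertical harmonic map. Your additional paragraph checking that this map-theoretic vertical tension field agrees with Wood's $\mathrm{tr}\,\nabla^{v}\sigma^{v}_{*}$ is extra care the paper does not spell out, but it does not change the argument.
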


\section{Applications}

\noindent{{\large{\it Tangent Bundle with Complete lift}}\\

Let $(M,g)$ be a Riemannian manifold and $TM$ its tangent bundle. To study the vertical martingales in $TM$ we need to introduce a connection on it. Denoting by $\nabla^{g}$ the Levi-Civita connection we prolong $\nabla^{g}$ to the complete lift $\nabla^{c}$ on $TM$ (see \cite{yano} for the definition of $\nabla^{c}$). In a nutshell, if $X,Y$ are vector fields on $M$, then $\nabla^{c}$ satisfies the following equations:
\begin{equation}\label{canonicallift}
\begin{array}{ccl}
\displaystyle \nabla^{c}_{X^{v}}Y^{v} & = & 0  \\
\nabla^{c}_{X^{v}}Y^{h} & = & 0  \\
\nabla^{c}_{X^{h}}Y^{v} & = & (\nabla_{X}Y)^{v} \\
\nabla^{c}_{X^{h}}Y^{h} & = & (\nabla_{X}Y)^{h} + \gamma(R(-,X)Y,
\end{array}
\end{equation}
where $R(-,X)Y$ denotes a tensor field $W$ of type (1,1) on $M$ such that \linebreak $W(Z)=R(Z,X)Y$ for any $Z \in T^{(0,1)}(M)$, and $\gamma$ is a lift of tensors, which is defined at page 12 in \cite{yano}. Furthermore, $X^{v},Y^{v}$ and $X^{h},Y^{h}$ are the vertical and horizontal lift of the $X,Y$ on $TM$, respectively. A direct account shows that $\pi_{TM}: TM \rightarrow M$ is an affine submersion with horizontal distribution.

\begin{prop}\label{verticalcanonical}
Let $(M,g)$ be a Riemannian manifold and $TM$ its tangent bundle equipped with the complete lift $\nabla^{c}$. Let $X_{t}$ be a semimartingale and $J_{t} = \pi_{TM}(X_{t})$ a semimartingale in $M$. Then $X_t$ is a vertical martingale if and only if, for each vertical form $\theta$,
\[
\int_{0}^{t} \theta \delta^{v} X_{s} - \int_{0}^{t} \theta (\delta J_{s})^{v} + \int_{0}^{t} \theta^{v*} d^M J_{s}
\]
is a real local martingale, where $\theta^{v*}$ is the push-forward of $\theta$ by the vertical lift on $TM$. In the case that $J_{t}$ is a $\nabla^{M}$-martingale, $X_{t}$ is a vertical martingale if and only if
\[
\delta^{v}X_{t} = (\delta J_{t})^{v}.
\]
\end{prop}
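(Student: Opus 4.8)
The plan is to run the whole argument in bundle coordinates $(x^1,\dots,x^m,v^1,\dots,v^m)$ of $TM$, which are of the form (\ref{coordinates}), so that the local formulas for the vertical It\^o integral, the vertical Stratonovich integral and the It\^o integral on $M$ are all at hand. First I would read off the vertical connection $\nabla^v=\mathbf{v}\nabla^c$ from (\ref{canonicallift}). Writing the coordinate field $\partial/\partial x^j$ as a horizontal lift plus a vertical correction $\Gamma^k_{lj}v^l\,\partial/\partial v^k$, the relations $\nabla^c_{X^v}Y^v=0$ and $\nabla^c_{X^h}Y^v=(\nabla_XY)^v$ immediately give that the Christoffel symbols of $\nabla^v$ with all lower indices vertical vanish, $\Gamma^\alpha_{\beta\gamma}=0$, while the mixed symbols $\Gamma^\alpha_{\beta j}$ are the pull-back by $\pi_{TM}$ of the Christoffel symbols of $\nabla^g$. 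Substituting into the local formula in the definition of the vertical It\^o integral then gives $\int_0^t\theta\,d^v X_s$ as $\int_0^t\theta_\alpha(X_s)\,dX^\alpha_s$ plus one bracket term whose coefficient is a base Christoffel symbol evaluated along $J_s$.

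Next I would expand the three integrals in the statement in the same coordinates: $\int_0^t\theta\,\delta^v X_s$ from the definition of the vertical Stratonovich integral; $\int_0^t\theta(\delta J_s)^v$ by observing that in these coordinates $(\delta J_s)^v$ is the Stratonovich differential of the base coordinates of $X$, seen as a vertical vector at $X_s$; and $\int_0^t\theta^{v*}\,d^M J_s$ from the definition of the It\^o integral of $\nabla^M$, with $\theta^{v*}$ the $1$-form along $J$ whose components are $\theta_\alpha(X_s)$. Converting every Stratonovich integral to It\^o form by $\delta=d+\tfrac12 d[\cdot,\cdot]$, the stochastic-integral parts cancel between $-\int\theta(\delta J)^v$ and $\int\theta^{v*}d^MJ$, and — using the Christoffel computation above together with the fact that $\pi_{TM}\colon TM\to M$ is an affine submersion with horizontal distribution — the surviving bracket terms reassemble into the drift of $\int_0^t\theta\,d^v X_s$. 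One obtains
\[
\int_0^t\theta\,\delta^v X_s-\int_0^t\theta(\delta J_s)^v+\int_0^t\theta^{v*}\,d^M J_s=\int_0^t\theta\,d^v X_s+(\text{real local martingale}),
\]
and since $X$ is a vertical martingale exactly when $\int_0^t\theta\,d^v X_s$ is a real local martingale for every vertical form $\theta$, the first equivalence follows. An equivalent, more structured route to this identity is to start from Proposition \ref{changeverticalformula} and identify $\tfrac12\int_0^t\nabla^v\theta(dX,\mathbf{v}dX)_s$ with $-\int\theta(\delta J)^v+\int\theta^{v*}d^MJ$ modulo a local martingale.

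For the last assertion, assume $J$ is a $\nabla^M$-martingale. Then $\int_0^t\theta^{v*}\,d^M J_s$ is a real local martingale for every vertical $\theta$, being the It\^o integral of an adapted $1$-form along a $\nabla^M$-martingale (localize and reduce to a fixed form on $M$, as in the local characterization of martingales invoked in Proposition \ref{equivalencemartingale}). By the displayed identity, $X$ is then a vertical martingale if and only if $\int_0^t\theta\,\delta^v X_s-\int_0^t\theta(\delta J_s)^v$ is a real local martingale for every vertical $\theta$. Reading $\delta^v X_t=(\delta J_t)^v$ as the assertion that $\int_0^t\theta\,\delta^v X_s=\int_0^t\theta(\delta J_s)^v$ for every vertical $\theta$, one implication is immediate; for the converse I would again pass to It\^o form and note that the difference is visibly the sum of a stochastic integral and an explicit combination of bracket integrals, so that being a local martingale for every $\theta$ forces the bracket part to vanish, hence $\delta^v X_t=(\delta J_t)^v$.

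The delicate point throughout is the bookkeeping of the second step: one must distinguish carefully the base coordinate functions from the fibre ones (the vertical It\^o integral differentiates the fibre coordinates of $X$, whereas $\int\theta(\delta J)^v$ and $\int\theta^{v*}d^MJ$ live on the base), and check that after converting Stratonovich to It\^o the leftover quadratic-variation terms really do rebuild the drift of the vertical It\^o integral; this cancellation works precisely because of the special shape (\ref{canonicallift}) of $\nabla^c$, i.e. because $\nabla^v$ has no purely vertical Christoffel symbols and reproduces $\nabla^g$ in the horizontal directions. The converse in the last step is the only other subtlety, since a local-martingale condition holding for all $\theta$ has to be promoted to an honest identity of Stratonovich differentials.
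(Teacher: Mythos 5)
Your overall architecture (adapted coordinates of type (\ref{coordinates}), Christoffel symbols ${\Gamma^{v}}^{\alpha}_{\beta\gamma}=0$ and ${\Gamma^{v}}^{\alpha}_{\beta j}=\Gamma^{\alpha}_{\beta j}\circ\pi_{TM}$, then Stratonovich-to-It\^o bookkeeping) is a genuinely different route from the paper, which argues intrinsically: it starts from Proposition \ref{changeverticalformula}, proves the pointwise identity $\nabla^{v}\theta(A,\mathbf{v}A)=\nabla^{M}\theta^{v*}(B,B)$ with $B=\pi_{TM*}A$ by using $\nabla^{c}_{X^{v}}Y^{v}=0$ and $\nabla^{c}_{X^{h}}Y^{v}=(\nabla_{X}Y)^{v}$ from (\ref{canonicallift}) together with the identifications $\mathbf{v}A=B^{v}$, $\mathbf{h}A=B^{h}$, and only then converts $\tfrac12\int\nabla^{M}\theta^{v*}(dJ,dJ)$ via the It\^o--Stratonovich change formula on $M$. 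The problem is that your decisive step --- ``the surviving bracket terms reassemble into the drift of $\int\theta\,d^{v}X$'' --- is asserted, not carried out, and it is exactly where the content of the proposition sits. If you actually expand: $\int\theta\,\delta^{v}X$ contributes $\tfrac12\int D_{\beta}\theta_{\alpha}\,d[X^{\beta},X^{\alpha}]+\tfrac12\int D_{j}\theta_{\alpha}\,d[X^{j},X^{\alpha}]$; $\int\theta(\delta J)^{v}$ contributes $\int\theta_{\alpha}(X)\,dJ^{\alpha}+\tfrac12\int D_{\beta}\theta_{\alpha}\,d[X^{\beta},J^{\alpha}]+\tfrac12\int D_{j}\theta_{\alpha}\,d[X^{j},J^{\alpha}]$; $\int\theta^{v*}d^{M}J$ contributes $\int\theta_{\alpha}(X)\,dJ^{\alpha}+\tfrac12\int\theta_{\gamma}\Gamma^{\gamma}_{ij}(J)\,d[J^{i},J^{j}]$; whereas the drift of $\int\theta\,d^{v}X$ is $\tfrac12\int\theta_{\gamma}\Gamma^{\gamma}_{\beta j}(J)\,d[X^{\beta},X^{j}]$ with $\beta$ a \emph{fibre} index. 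After the $\int\theta_{\alpha}\,dJ^{\alpha}$ cancellation you are left with finite-variation terms of the type $\tfrac12\int D_{\beta}\theta_{\alpha}\,d[X^{\beta},X^{\alpha}-J^{\alpha}]$, $\tfrac12\int D_{j}\theta_{\alpha}\,d[X^{j},X^{\alpha}-J^{\alpha}]$ and the discrepancy between $d[J^{i},J^{j}]$ and $d[X^{\beta},X^{j}]$, i.e.\ brackets involving the fibre coordinates of $X$, which for a general semimartingale $X$ in $TM$ cannot be rewritten in terms of $J$ alone; and since these are finite-variation, working ``modulo a real local martingale'' buys you nothing. The special shape of $\nabla^{c}$ and the affine-submersion property, which you invoke, do not by themselves produce this cancellation: the extra input the paper uses is precisely the replacement of the quadratic integral $\int\nabla^{v}\theta(dX,\mathbf{v}dX)$ by $\int\nabla^{M}\theta^{v*}(dJ,dJ)$ through the identification $\mathbf{v}A=(\pi_{TM*}A)^{v}$. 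So either you reproduce and justify that step, or your coordinate computation does not close; as written, the proposal has a genuine gap at its central claim.

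A second, smaller gap is in the converse of the last equivalence. Forcing the bracket (finite-variation) part to vanish only shows that $\int\theta\,\delta^{v}X-\int\theta(\delta J)^{v}$ is a real local martingale for every vertical $\theta$; it does not yet give $\delta^{v}X_{t}=(\delta J_{t})^{v}$. To finish you must also kill the martingale part: taking $\theta=dv^{\alpha}$ shows $X^{\alpha}-J^{\alpha}$ is a local martingale, and taking $\theta=f\,dv^{\alpha}$ for arbitrary $f$ forces $d[f(X),X^{\alpha}-J^{\alpha}]=0$, hence $X^{\alpha}-J^{\alpha}$ has vanishing quadratic variation and is constant, which is the asserted identity. (The paper itself ends with ``and the proof follows'' here, so you are not worse off than the source, but since you flagged this converse as the subtle point you should supply this argument.) The uncontroversial part of your last step --- that $\int\theta^{v*}d^{M}J$ is a real local martingale when $J$ is a $\nabla^{M}$-martingale, by localization in a chart as in Proposition \ref{equivalencemartingale} --- is fine.
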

\begin{proof}
Let $X_{t}$ be a semimartingale in $E$ and $\theta$ a vertical form on $E$. Then from change formula \ref{changeverticalformula} we see that
\[
\int_{0}^{t} \theta d^{v}X_{s} = \int_{0}^{t} \theta \delta^{v} X_{s} + \frac{1}{2}\int_{0}^{t} \nabla^{v}\theta(dX,dX)_{s}.
\]
We now calculate $\nabla^{v}\theta$. Taking a vector field $A$ on $E$ and denoting $B = \pi_{TM*}(A)$ we compute
\begin{eqnarray*}
\nabla^{v}\theta(A,\mathbf{v}A)
& = & A\theta(\mathbf{v}A) - \theta(\nabla^{v}_{A}\mathbf{v}A)\\
& = & \mathbf{v}A\theta(\mathbf{v}A) - \theta(\nabla^{v}_{\mathbf{h}A}\mathbf{v}A),
\end{eqnarray*}
where we used that $\nabla^{v}_{\mathbf{v}A}\mathbf{v}A = 0$ in the last equality. As $B = \pi_{TM*}(A)$ we have $\mathbf{v}A = B^{v}$ and $\mathbf{h}A = B^{h}$. We thus obtain
\[
\nabla^{v}\theta(A,\mathbf{v}A) = B^{v}\theta(B^v) - \theta(\nabla^{M}_{B}B)^v,
\]
due to (\ref{canonicallift})  and definition of $\nabla^v$ on $TM$. Denoting the push-forward of $\theta$ by vertical lift as $\theta^{v*}$ we deduce that
\[
\nabla^{v}\theta(A,\mathbf{v}A) = B(\theta^{*v}B) - \theta^{v*}(\nabla^{M}_{B}B) = \nabla^{M}\theta^{v*}(B,B).
\]
Denote $J_t = \pi_{TM*}(X_t)$. It follows that
\[
\int_{0}^{t} \theta d^{v}X_{s} = \int_{0}^{t} \theta \delta^{v} X_{s} + \frac{1}{2}\int_{0}^{t} \nabla^{M}\theta^{v*}(dJ,dJ)_{s}.
\]
We again use the change formula to obtain
\[
\int_{0}^{t} \theta d^{v}X_{s} = \int_{0}^{t} \theta \delta^{v} X_{s} - \int_{0}^{t} \theta^{v*} \delta J_{s} + \int_{0}^{t} \theta^{v*}d^M J_{s},
\]
and the proof follows. \qed
\end{proof}

Our next step is to use the Proposition \ref{verticalcanonical} and Theorem \ref{harmonicsection} to study the harmonic section of $\pi_{TM}$.

\begin{prop}
Let $(M,g)$ be a Riemannian manifold and $TM$ its tangent bundle equipped with the complete lift $\nabla^{c}$.  Then a section $\sigma$ of $\pi_{TM}$ is harmonic section if and only if $\mathbf{v}\sigma_{*}$ is null.
\end{prop}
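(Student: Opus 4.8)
The plan is to read the statement off the stochastic characterisation of harmonic sections (Theorem \ref{harmonicsection}) together with the description of vertical martingales in $TM$ provided by Proposition \ref{verticalcanonical}. First I would record that a section $\sigma$ of $\pi_{TM}$ is nothing but a vector field on $M$. Fix a $g$-Brownian motion $B_t$ on $M$ and put $X_t=\sigma(B_t)$; since $\sigma$ is a section, $J_t:=\pi_{TM}(X_t)=B_t$, and a $g$-Brownian motion is in particular a $\nabla^{g}$-martingale, so $J_t$ is a $\nabla^{M}$-martingale. Hence the second part of Proposition \ref{verticalcanonical} applies: $X_t=\sigma(B_t)$ is a vertical martingale if and only if $\delta^{v}X_t=(\delta J_t)^{v}$, i.e. for every vertical form $\theta$ on $TM$ the process $\int_{0}^{t}\theta\,\delta^{v}\sigma(B_s)-\int_{0}^{t}\theta\,(\delta B_s)^{v}$ is a real local martingale. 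By Theorem \ref{harmonicsection} it therefore suffices to prove that this last condition, for every $g$-Brownian motion $B$, is equivalent to $\mathbf{v}\sigma_{*}=0$.

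Next I would unwind the two Stratonovich terms. By the vertical It\^o--Stratonovich formula (\ref{itostratonovichformula}) and Lemma \ref{commuteverticalform} (which gives $\mathbf{v}\sigma^{*}\theta=\sigma^{*}\theta$ for vertical $\theta$) one has $\int_{0}^{t}\theta\,\delta^{v}\sigma(B_s)=\int_{0}^{t}\sigma^{*}\theta\,\delta B_s$; moreover, since a vertical form annihilates horizontal vectors and $\sigma_{*}X=H(X)+\mathbf{v}\sigma_{*}X$, the $1$-form $\sigma^{*}\theta$ depends on $\sigma$ only through $\mathbf{v}\sigma_{*}$. The term $\int_{0}^{t}\theta\,(\delta B_s)^{v}$ is handled exactly as in the proof of Proposition \ref{verticalcanonical}, using the complete-lift equations (\ref{canonicallift}), and is built only out of $J=\pi_{TM}(X)$, carrying no information about $\mathbf{v}\sigma_{*}$. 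Converting the remaining Stratonovich integrals to It\^o integrals and using that $B$ is Brownian, so that $\int_{0}^{t}b(dB,dB)=\int_{0}^{t}\mathrm{tr}\,b_{B_s}\,ds$ by (\ref{Brownian}), the martingale requirement collapses to an absolutely continuous drift: $\sigma(B_t)$ is a vertical martingale for every $g$-Brownian motion $B$ if and only if, for each vertical form $\theta$, a process $\int_{0}^{t}h_{\theta}(B_s)\,ds$ is a local martingale, hence (being of finite variation and running $B$ from every point) $h_{\theta}\equiv 0$ on $M$.

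It then remains to identify $h_{\theta}$. A computation in the coordinates (\ref{coordinates}), carried out with (\ref{canonicallift}), exhibits $h_{\theta}$ as a $g$-trace of the vertical second fundamental form of $\sigma$ contracted with $\theta$, so that $h_{\theta}\equiv 0$ for all vertical $\theta$ is exactly $\tau_{\sigma}^{v}=0$; and for the complete lift one checks, again from (\ref{canonicallift}) and the definition $\nabla^{v}_{X}V=\mathbf{v}\nabla^{E}_{X}V$, that $\tau_{\sigma}^{v}$ is the vertical lift of the rough Laplacian $\mathrm{tr}_{g}\nabla^{g}\nabla^{g}\sigma$ of the vector field $\sigma$. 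One direction of the desired equivalence is then immediate: if $\mathbf{v}\sigma_{*}=0$, i.e. $\nabla^{g}\sigma=0$, then $\sigma^{*}\theta=0$ for every vertical $\theta$, the $(\delta B)^{v}$ term contributes likewise, and $\sigma(B_t)$ is trivially a vertical martingale. For the reverse direction one integrates the Weitzenb\"ock identity $\int_{M}\langle\mathrm{tr}_{g}\nabla^{g}\nabla^{g}\sigma,\sigma\rangle=-\int_{M}|\nabla^{g}\sigma|^{2}$, which forces $\nabla^{g}\sigma=0$, that is $\mathbf{v}\sigma_{*}=0$.

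The \textbf{main obstacles} I anticipate are two. First, the bookkeeping of the term $\int_{0}^{t}\theta\,(\delta B_s)^{v}$ and the explicit computation of the drift $h_{\theta}$ from the complete-lift Christoffel data (\ref{canonicallift}) in the chart (\ref{coordinates}); this is routine but must be done carefully, since the vertical projections $\mathbf{v}$ on $TE$ and on $\tau E$ enter at several places. Second, and more substantial, the passage from the vanishing of the drift for all vertical $\theta$ — equivalently the vanishing of the tension field $\tau^{v}_{\sigma}$, equivalently that $\sigma(B_t)$ is a vertical martingale for every Brownian motion — to the pointwise identity $\mathbf{v}\sigma_{*}=0$: this is the place where the global (Weitzenb\"ock, hence compactness-type) argument on $M$ is needed, and where the computation of $\tau^{v}_{\sigma}$ for $\nabla^{c}$ as a vertical lift of a rough Laplacian is the key input.
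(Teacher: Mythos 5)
Your reduction is faithful to the paper up to a point: like the author you pass through Theorem \ref{harmonicsection} and Proposition \ref{verticalcanonical}, and your identification of the vertical tension for the complete lift, $\tau^{v}_{\sigma}=(\mathrm{tr}_{g}\nabla^{g}\nabla^{g}\sigma)^{v}$, is correct. The genuine gap is the last step of the reverse direction. You conclude $\nabla^{g}\sigma=0$ from $\mathrm{tr}_{g}\nabla^{g}\nabla^{g}\sigma=0$ by integrating the Weitzenb\"ock identity over $M$; that integration requires $M$ compact (or integrability/decay hypotheses on $\sigma$), and the proposition assumes only that $(M,g)$ is a Riemannian manifold. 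Worse, the implication you need at that stage is false in this generality: on $M=\mathbb{R}^{n}$ the section $\sigma=x^{1}\partial_{1}$ satisfies $\nabla^{g}\nabla^{g}\sigma=0$, hence $\tau^{v}_{\sigma}=0$, while $\mathbf{v}\sigma_{*}=(\nabla^{g}\sigma)^{v}\neq 0$. So once you have reduced the vertical-martingale condition to ``the drift vanishes'', i.e.\ to $\tau^{v}_{\sigma}=0$ alone, no argument can recover $\mathbf{v}\sigma_{*}=0$ without extra global hypotheses; the problem is not the bookkeeping you flag as an obstacle, but that this reduction discards exactly the information the paper uses.

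The paper's proof takes a different route and never integrates over $M$. It keeps the second assertion of Proposition \ref{verticalcanonical} in its strong form: for $J=B$ a martingale, the vertical-martingale property is read as the identity of Stratonovich differentials $\delta^{v}X_{t}=(\delta J_{t})^{v}$, which together with formula (\ref{itostratonovichformula}) gives $\mathbf{v}\sigma_{*}\delta B_{s}=(\delta B_{s})^{v}$ for every Brownian motion $B$. Since $\tau^{v}$ is linear in $\sigma$ for the complete lift, $2\sigma$ is harmonic whenever $\sigma$ is, so the same identity applied to $2\sigma$ yields $\mathbf{v}\sigma_{*}\delta B=2\,\mathbf{v}\sigma_{*}\delta B$, forcing $\mathbf{v}\sigma_{*}=0$ with no global analysis at all. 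Note that this is precisely the equality you (reasonably) relaxed to ``the difference is a real local martingale''; under the relaxed reading one is back in your situation, where only $\tau^{v}_{\sigma}=0$ is available and, by the example above, the conclusion does not follow. So to close your argument you must either invoke Proposition \ref{verticalcanonical} in the strong ``equality of differentials'' form the paper uses (and satisfy yourself that this reading is justified), or add a hypothesis such as compactness of $M$ for the Weitzenb\"ock step --- as is done in the variant statement in the source that assumes $M$ compact or of nonnegative Ricci curvature.
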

\begin{proof}
If $\mathbf{v}\sigma_{*}$ is null, it is immediate that $\sigma$ is harmonic map. Suppose, contrary to our claim, that there exist a harmonic section $\sigma$ of $\pi_{TM}$ such that $\mathbf{v}\sigma_{*}$ is no null. Since $\sigma$ is a harmonic section, Theorem \ref{harmonicsection} assures that for every Brownian motion $B_{t}$ in $M$ we have that $\sigma(B_{t})$ is a vertical martingale. Applying this in Proposition \ref{verticalcanonical} and using the It\^o formula for vertical Stratonovich integral (\ref{itostratonovichformula}) we get
\[
\mathbf{v}\sigma_{*}\delta B_{s}= (\delta B_{s})^{v}.
\]
In this way, taking two harmonic sections $\sigma_{1}$ and $\sigma_{2}$ we have $\mathbf{v}\sigma_{1*}\delta B_{s}=\mathbf{v}\sigma_{2*}\delta B_{s}$, for any Brownian motion $B_{t}$ in $M$. However, if $\sigma_{1}$ is harmonic section, then $\sigma_{2} = 2 \sigma_{1}$ is too. So we thus get $\mathbf{v}\sigma_{1*}\delta B_{s}=2 \mathbf{v}\sigma_{1*}\delta B_{s}$, a contradiction. \qed\\
\end{proof}


{\large {\it Tangent bundle with Sasaki metric}}\\

Let $M$ be a Riemannian manifold and $TM$ the tangent bundle equipped with the Sasaki metric $g_{s}$. See for example \cite{gudmundsson} for a complete study about Sasaky metric. In therms of vertical and horizontal distribution the Levi-Civita connection $\nabla^{s}$ for Sasaky metric is given by
In a nutshell, if $X,Y$ are vector fields on $M$, then $\nabla^{c}$ satisfies the following equations:
\begin{equation}\label{canonicallift}
\begin{array}{ccl}
\displaystyle \nabla^{s}_{X^{v}}Y^{v} & = & 0  \\
\nabla^{s}_{X^{v}}Y^{h} & = & \frac{1}{2}(R(-,X)Y)^{h} \\
\nabla^{s}_{X^{h}}Y^{v} & = & (\nabla_{X}Y)^{v}+ \frac{1}{2}(R(-,X)Y)^{h} \\
\nabla^{s}_{X^{h}}Y^{h} & = & (\nabla_{X}Y)^{h} - \frac{1}{2}(R(-,X)Y)^{v},
\end{array}
\end{equation}
where $R(-,X)Y$ denotes a tensor field $W$ of type (1,1) on $M$ such that \linebreak $W(Z)=R(Z,X)Y$ for any $Z \in T^{(0,1)}(M)$. A simple observation shows that the vertical connection on $TM$ defined from $\nabla^{s}$ is equal to vertical connection defined from $\nabla^{c}$. Furthermore, $\pi_{TM}: TM \rightarrow M$ is an affine submersion with horizontal distribution. Then, similarly, we have the following results.

\begin{prop}
Let $(M,g)$ be a Riemannian manifold and $TM$ its tangent bundle equipped with the Sasaky metric. Let $X_{t}$ be a semimartingale and \linebreak $J_{t} = \pi_{TM}(X_{t})$ a semimartingale in $M$. Then $X_t$ is a vertical martingale if and only if, for each vertical form $\theta$,
\[
\int_{0}^{t} \theta \delta^{v} X_{s} - \int_{0}^{t} \theta (\delta J_{s})^{v} + \int_{0}^{t} \theta^{v*} d^M J_{s}
\]
is a real local martingale, where $\theta^{v*}$ is the push-forward of $\theta$ by the vertical lift on $TM$. In the case that $J_{t}$ is a $\nabla^{M}$-martingale, $X_{t}$ is a vertical martingale if and only if
\[
\delta^{v}X_{t} = (\delta J_{t})^{v}.
\]
\end{prop}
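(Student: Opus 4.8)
The plan is to follow, line by line, the proof of Proposition \ref{verticalcanonical}, exploiting the observation made just above the statement: the vertical connection $\nabla^{v}$ on $TM$ induced by the Sasaki Levi-Civita connection $\nabla^{s}$ coincides with the one induced by the complete lift $\nabla^{c}$. Concretely, from the displayed structure equations for $\nabla^{s}$ one has $\nabla^{v}_{X^{v}}Y^{v} = \mathbf{v}\nabla^{s}_{X^{v}}Y^{v} = 0$ and $\nabla^{v}_{X^{h}}Y^{v} = \mathbf{v}\big((\nabla_{X}Y)^{v} + \tfrac{1}{2}(R(-,X)Y)^{h}\big) = (\nabla_{X}Y)^{v}$, because $(R(-,X)Y)^{h}$ is horizontal and is therefore annihilated by $\mathbf{v}$. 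These are exactly the two identities used in the complete-lift case, so every subsequent step is unchanged.

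First I would apply the change formula of Proposition \ref{changeverticalformula} to a semimartingale $X_{t}$ in $TM$ and a vertical form $\theta$, writing $\int_{0}^{t}\theta d^{v}X_{s} = \int_{0}^{t}\theta\delta^{v}X_{s} + \tfrac{1}{2}\int_{0}^{t}\nabla^{v}\theta(dX,\mathbf{v}dX)_{s}$. Next I would evaluate $\nabla^{v}\theta$ on a pair $(A,\mathbf{v}A)$ for a vector field $A$ on $TM$: using $\nabla^{v}_{\mathbf{v}A}\mathbf{v}A = 0$ (the first identity above) and splitting $A = \mathbf{h}A + \mathbf{v}A$, this reduces to $\mathbf{v}A\,\theta(\mathbf{v}A) - \theta(\nabla^{v}_{\mathbf{h}A}\mathbf{v}A)$. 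Setting $B = \pi_{TM*}(A)$, so that $\mathbf{v}A = B^{v}$ and $\mathbf{h}A = B^{h}$, the second identity gives $\nabla^{v}_{\mathbf{h}A}\mathbf{v}A = (\nabla^{M}_{B}B)^{v}$, whence $\nabla^{v}\theta(A,\mathbf{v}A) = B(\theta^{v*}B) - \theta^{v*}(\nabla^{M}_{B}B) = \nabla^{M}\theta^{v*}(B,B)$, where $\theta^{v*}$ is the push-forward of $\theta$ by the vertical lift.

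Then, with $J_{t} = \pi_{TM}(X_{t})$, the quadratic-integral identity yields $\int_{0}^{t}\theta d^{v}X_{s} = \int_{0}^{t}\theta\delta^{v}X_{s} + \tfrac{1}{2}\int_{0}^{t}\nabla^{M}\theta^{v*}(dJ,dJ)_{s}$, and a final application of the change formula on $M$ rewrites the last term as $\int_{0}^{t}\theta^{v*}d^{M}J_{s} - \int_{0}^{t}\theta^{v*}\delta J_{s}$. This shows that $X_{t}$ is a vertical martingale exactly when $\int_{0}^{t}\theta\delta^{v}X_{s} - \int_{0}^{t}\theta(\delta J_{s})^{v} + \int_{0}^{t}\theta^{v*}d^{M}J_{s}$ is a real local martingale for every vertical $\theta$; when $J_{t}$ is a $\nabla^{M}$-martingale the middle integral is already a local martingale, forcing $\delta^{v}X_{t} = (\delta J_{t})^{v}$. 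The only point that genuinely needs checking — and the place where the Sasaki case could a priori differ from the complete-lift case — is the vanishing of the curvature cross-terms under $\mathbf{v}$, which is immediate since those terms are horizontal; beyond that the argument is a verbatim transcription of the proof of Proposition \ref{verticalcanonical}.
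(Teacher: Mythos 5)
Your proof is correct and is essentially the paper's own argument: the paper proves the complete-lift case (Proposition \ref{verticalcanonical}) and then simply notes that the vertical connection induced by the Sasaki connection coincides with that of the complete lift (the curvature terms being horizontal, they vanish under $\mathbf{v}$), so the proof carries over verbatim. Your explicit verification of that coincidence is exactly the ``simple observation'' the paper invokes before stating this proposition.
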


\begin{prop}
Let $(M,g)$ be a Riemannian manifold and $TM$ its tangent bundle equipped with the Sasaky metric. Then a section $\sigma$ of $\pi_{TM}$ is harmonic section if and only if $\mathbf{v}\sigma_{*}$ is null.
\end{prop}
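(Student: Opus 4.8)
The plan is to copy, essentially verbatim, the argument used for the complete lift $\nabla^{c}$, after first making explicit that the two settings coincide at the level of the vertical data. Concretely, the text already records that the vertical connection on $TM$ built from the Sasaki Levi-Civita connection $\nabla^{s}$ equals the one built from $\nabla^{c}$; I would spell this out by noting that the terms distinguishing the Sasaki formulas from the complete-lift formulas — the $\tfrac12(R(-,X)Y)^{h}$ contributions and the $-\tfrac12(R(-,X)Y)^{v}$ contribution — are either horizontal or only occur with horizontal arguments, hence drop out after applying $\mathbf{v}$ to $\nabla^{s}_{X}Y$ with $Y$ vertical. Thus $\nabla^{v}$, and with it the notions of vertical martingale and of harmonic section, are literally unchanged, and moreover $\pi_{TM}$ is an affine submersion with horizontal distribution, so Theorem~\ref{harmonicsection} applies.

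The forward direction is immediate: if $\mathbf{v}\sigma_{*}=0$ then $\sigma_{*}^{v}=0$, so $\tau_{\sigma}^{v}=\mathrm{tr}\,\nabla^{v}\sigma_{*}^{v}=0$, i.e.\ $\sigma$ is a harmonic section.

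For the converse I would argue by contradiction, exactly as in the complete-lift case. Assume $\sigma$ is a harmonic section. By Theorem~\ref{harmonicsection}, $\sigma(B_{t})$ is a vertical martingale for every $g$-Brownian motion $B_{t}$ in $M$. Since $\pi_{TM}(\sigma(B_{t}))=B_{t}$ is in particular a $\nabla^{M}$-martingale, the Sasaki analogue of Proposition~\ref{verticalcanonical} (the proposition immediately preceding the one being proved) yields $\delta^{v}\sigma(B_{t})=(\delta B_{t})^{v}$, and passing through the It\^o formula for the vertical Stratonovich integral~(\ref{itostratonovichformula}) rewrites this as $\mathbf{v}\sigma_{*}\,\delta B_{s}=(\delta B_{s})^{v}$. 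Because the fibers of $\pi_{TM}$ are $\nabla^{v}$-flat (as $\nabla^{c}_{X^{v}}Y^{v}=0$), fiber dilation is $\nabla^{v}$-affine, so $2\sigma$ is again a harmonic section; applying the identity above to both $\sigma$ and $2\sigma$ forces $\mathbf{v}\sigma_{*}\,\delta B_{s}=2\,\mathbf{v}\sigma_{*}\,\delta B_{s}$, hence $\mathbf{v}\sigma_{*}\,\delta B_{s}=0$ for every Brownian motion $B_{t}$, which contradicts $\mathbf{v}\sigma_{*}\neq 0$.

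The only obstacle, such as it is, lies in the first paragraph: one must be confident that the vertical connection is genuinely insensitive to the difference between $\nabla^{s}$ and $\nabla^{c}$, so that Proposition~\ref{verticalcanonical}, Theorem~\ref{harmonicsection} and the change/It\^o formulas transfer without modification. Once that reduction is in place there is nothing new to prove; the probabilistic core has already been established in the previous sections.
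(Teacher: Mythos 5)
Your proposal is correct and follows essentially the route the paper intends: it reduces the Sasaki case to the complete-lift case by observing that the extra curvature terms in $\nabla^{s}$ are horizontal (or involve only horizontal arguments) and hence do not affect the vertical connection, and then repeats verbatim the contradiction argument with $\sigma$ and $2\sigma$ used for $\nabla^{c}$. This is exactly what the paper's ``Then, similarly, we have the following results'' amounts to, with the reduction (and the harmonicity of $2\sigma$) spelled out slightly more explicitly than in the text.
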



{\large {\it Principal Riemannian fiber bundle}}\\

M. Arnaudon and S. Paycha, in \cite{arnaudon-paycha}, shows that semimartingales in a principal fiber bundle $P(M,G)$
with $G$-invariant Riemannian metric $k$ can be decomposed into $G$- and $M$- valued semimartingales. More precisely, a
semimartingale $X$ with values in $P(M,G)$ splits in a unique way into a horizontal semimartingale $X^h$ and a
semimartingale $V$ with values in $G$ such that
\[
X= X^h \cdot V.
\]
Moreover, $V$ is the stochastic exponential
\[
V = \epsilon(\int \omega \delta X)
\]
and $X^h$ is the solution of the It\^o equation
\[
d^{\nabla^k}X^h=H^k_{\widetilde{X}}d^{\nabla}(\pi \circ X).
\]

Now we induce from the metric $k$, at each fiber $\pi^{-1}(x)$, $x \in M$, a metric $k_{x}$ such that $\pi_{P}:P \rightarrow M$ has totally geodesic fibers property. Also we will induce a metric $h$ in $G$ such that $p: G \rightarrow \pi^{-1}(x)\subset P$ is an isometric map, for each $p \in P$. Let us denote by $\nabla^{k},\nabla^{x}$ and $\nabla^{h}$ the Levi-civita connections on $P,\pi^{-1}(x)$ and $G$, respectively. Observing that $\nabla^{k}$ induces a connection $\nabla^{x}$ at each fiber and the vertical connection $\nabla^{v}$ coincides with latter in the fibers we conclude that $p$ is affine maps according $\nabla^{h}$ and $\nabla^{v}$.

Our finally goal is to study the vertical martingales in the environment described above. We begin with necessary Lemma.

\begin{lema}
If $\theta$ is a vertical form on $P(M,G)$ and if $A$ is a vector field on $P(M,G)$, then at each $p \in P$
\[
\nabla^{v}\theta(A,\mathbf{v}A)_{p} = \nabla^{G}u^{*}\theta(\xi,\xi)_{g},
\]
where $A = A^{h} + \xi$,  with $A^{h}$ is the horizontal lift of the $\pi_{*}(A)$ and $\xi$ a vertical vector field, and $p = u\cdot g$, with $u \in \pi_{P}^{-1}(x)$, $x = \pi_{P}(p)$, and $g \in G$.
\end{lema}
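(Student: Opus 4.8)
The plan is to reproduce, in the principal bundle setting, the pointwise computation used in the proof of Proposition~\ref{verticalcanonical}, replacing the identities~(\ref{canonicallift}) by the two structural facts available here: $k$ is $G$-invariant, so the horizontal lift of a vector field on $M$ is $G$-invariant and commutes with the vertical vector fields generated by the $G$-action; and $\pi_P$ has totally geodesic fibers, so $\mathbf{h}\nabla^{k}_{U}V=0$ for $U,V$ vertical. First I would fix $p=u\cdot g$ and $x=\pi_P(p)$ and unfold the left-hand side: since $\nabla^{v}$ is a connection on the vertical bundle $VP$ (with all of $TP$ as the space of directions) it induces one on $V^{*}P$, so that $\nabla^{v}\theta$ is the $(0,2)$-tensor $(\nabla^{v}_{\bullet}\theta)(\cdot)$ and
\[
\nabla^{v}\theta(A,\mathbf{v}A)_{p}=A_{p}\bigl(\theta(\mathbf{v}A)\bigr)-\theta_{p}\bigl(\nabla^{v}_{A}\mathbf{v}A\bigr).
\]
Writing $A=A^{h}+\xi$ with $\mathbf{v}A=\xi$ and using tensoriality in the first slot, the right-hand side splits as $\nabla^{v}\theta(A^{h},\xi)_{p}+\nabla^{v}\theta(\xi,\xi)_{p}$.

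The step I expect to be the main obstacle is to show $\nabla^{v}\theta(A^{h},\xi)_{p}=0$, that is, that once the Levi-Civita connection is projected onto $VP$ the differentiation of $A$ in horizontal directions drops out. For this I would choose near $p$ a frame $\eta_{1},\dots,\eta_{k}$ of $VP$ by fundamental vector fields and write $\xi=\sum f^{i}\eta_{i}$; since $A^{h}$ is $G$-invariant it commutes with each $\eta_{i}$, and combining this with the totally geodesic fibers property gives $\mathbf{v}\nabla^{k}_{A^{h}}\eta_{i}=\mathbf{v}\nabla^{k}_{\eta_{i}}A^{h}=0$, hence $\nabla^{v}_{A^{h}}\eta_{i}=0$. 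What remains to be checked is that the horizontal derivative of each function $\theta(\eta_{i})$ vanishes at $p$; granting this, $\nabla^{v}\theta(A^{h},\xi)_{p}=0$, so $\nabla^{v}\theta(A,\mathbf{v}A)_{p}=\nabla^{v}\theta(\xi,\xi)_{p}$, an expression depending only on the restrictions of $\theta$ and $\xi$ to the fiber $\pi_P^{-1}(x)$.

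It then remains to transfer this fiberwise quantity to $G$. On $\pi_P^{-1}(x)$ the vertical connection $\nabla^{v}$ coincides with the Levi-Civita connection $\nabla^{x}$ of $k_{x}$, so $\nabla^{v}\theta(\xi,\xi)_{p}=\nabla^{x}\theta(\xi,\xi)_{p}$; moreover the map $u\colon G\to\pi_P^{-1}(x)$, $h\mapsto u\cdot h$, is by construction an isometry of $(G,h)$ onto $(\pi_P^{-1}(x),k_{x})$, hence affine with respect to $\nabla^{G}=\nabla^{h}$ and $\nabla^{x}$, as already observed in the text. Since a diffeomorphic affine map intertwines the covariant derivative of a form, $u^{*}(\nabla^{x}\theta)=\nabla^{G}(u^{*}\theta)$; evaluating at $g=u^{-1}(p)$ and transporting $\xi$ by $u_{*}^{-1}$ (keeping the same symbol) yields $\nabla^{x}\theta(\xi,\xi)_{p}=\nabla^{G}u^{*}\theta(\xi,\xi)_{g}$, the asserted identity. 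Apart from the horizontal-cancellation step, the argument is just the naturality of the Levi-Civita connection under the affine isometry $u$, in the spirit of Proposition~\ref{verticalcanonical}.
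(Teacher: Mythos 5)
Your outline is, in substance, the paper's own proof: the expansion $\nabla^{v}\theta(A,\mathbf{v}A)=A\theta(\mathbf{v}A)-\theta(\nabla^{v}_{A}\mathbf{v}A)$, the use of $G$-invariance of the horizontal lift (so $[A^{h},\eta_{i}]=0$) together with torsion-freeness, metric compatibility and the totally geodesic fibres to get $\mathbf{v}\nabla^{k}_{A^{h}}\eta_{i}=\mathbf{v}\nabla^{k}_{\eta_{i}}A^{h}=0$, and the final transfer of the fibrewise expression to $G$ through the affine isometry $h\mapsto u\cdot h$ are exactly the steps the paper performs, in the same order. So there is no divergence of method.

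The step you explicitly leave open, however, is a genuine gap, not a routine verification you postponed. After your reduction the obstruction is $\nabla^{v}\theta(A^{h},\xi)_{p}=\sum_{i}f^{i}(p)\,A^{h}_{p}\bigl(\theta(\eta_{i})\bigr)$, and for an arbitrary vertical form $\theta$ the functions $\theta(\eta_{i})$ do vary in horizontal directions, so this quantity has no reason to vanish: the right-hand side of the lemma sees only the restriction of $\theta$ to the fibre $\pi_{P}^{-1}(x)$, while the left-hand side, as your computation makes visible, retains a horizontal derivative of the coefficients of $\theta$ (take $\theta=f\,\omega$ contracted with a fixed covector on the Lie algebra, with $f$ non-constant along the horizontal distribution, to see it survive whenever $\xi_{p}\neq 0$). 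No refinement of your frame argument, of the $G$-invariance of $A^{h}$, or of the totally geodesic fibres property can remove it, so your proof is conditional on an assertion that is false for general vertical $\theta$. You should also know that the paper does not close this point either: in its displayed computation the term $A^{h}_{p}\theta(\xi)_{p}$ is dropped without comment between the second and third lines, and the justification offered there (torsion-freeness, totally geodesic fibres, $\theta$ vertical) covers only the covariant-derivative term that you did handle correctly. In short, you have reproduced the paper's argument and, by being more careful, isolated the exact step where the lemma as stated needs an extra hypothesis on $\theta$ (for instance, coefficients constant in horizontal directions, or a purely fibrewise reading of the identity) rather than a missing trick.
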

\begin{proof}
Let $A$ be a vector fields. It is possible to write $A$ as $A = A^{h} + \xi$, where $\xi$ is a vector field on $G$ and $A^{h}$ is the horizontal lift of $\pi_{*}(A)$. At each $p \in P$ we compute
\begin{eqnarray*}
\nabla^{v*}\theta(A,\mathbf{v}A)_{p}
& = & A_{p}\theta(\mathbf{v}A)_{p} - \theta_{p}(\nabla^{v}_{A}\mathbf{v}A(p)) \\
& = & (A^{h}+ \xi)_{p}\theta(\xi)_{p} - \theta_{p}(\nabla^{v}_{A^{h} + \xi}\xi(p)) \\
& = & \xi_{p}\theta(\xi)_{p} - \theta_{p}(\nabla^{v}_{A^{h}}\xi(p) + \nabla^{v}_{\xi}\xi(p)).
\end{eqnarray*}
Since $\nabla^{k}$ is free torsion, $\pi_{P}$ has has totally geodesic fibers property and $\theta$ is vertical form, it follows that $\nabla^{v}_{A^{h}}\xi(p) = 0$. Therefore
\[
\nabla^{v*}\theta(A,\mathbf{v}A)_{p} = \xi_{p}\theta(\xi)_{p} - \theta_{p}(\nabla^{v}_{\xi}\xi(p)).
\]
Write $p = u\cdot g$. Because each $u \in P$ is a diffeomorphism and an affine map from fiber $\pi^{-1}(x)$ and $G$, $x =\pi_{P}(u)$, we conclude that
\begin{eqnarray*}
\nabla^{v*}\theta(A,\mathbf{v}A)_{p}
& = & u_{*}\xi_{g}u^{*}\theta(\xi_{g}) - u^{*}\theta(\nabla^{G}_{u_{*}\xi}u_{*}\xi(g))\\
& = & \nabla^{G}(u^{*}\theta)(u_{*}\xi,u_{*}\xi)(g),
\end{eqnarray*}
and the proof is complete.\qed
\end{proof}

The point of the Lemma is that it allows one to see that the dual connection on the fibers is the one on Lie group. It is the fundamental key to prove the

\begin{prop}
Let $X$ be a semimartingale in $P(M,G)$. $X$ is a vertical martingale if and only if $V$ is a $\nabla^{G}$-martingale.
\end{prop}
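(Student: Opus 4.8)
The goal is to relate the vertical-martingale property of $X$ in $P(M,G)$ to the $\nabla^{G}$-martingale property of $V$, using the decomposition $X = X^{h}\cdot V$ and the preceding Lemma that identifies the vertical dual connection with the Levi--Civita connection on $G$. The natural route is to test the vertical It\^o integral $\int_{0}^{t}\theta\,d^{v}X_{s}$ against an arbitrary vertical form $\theta$ on $P$ and show that, modulo real local martingales, it equals $\int_{0}^{t}(u^{*}\theta)\,d^{G}V_{s}$ (or more precisely an integral governed by $\nabla^{G}$ applied to $u^{*}\theta$), so that the two local-martingale conditions coincide. Concretely, I would first pass to the change formula of Proposition \ref{changeverticalformula} to write $\int_{0}^{t}\theta\,d^{v}X_{s} = \int_{0}^{t}\theta\,\delta^{v}X_{s} - \frac{1}{2}\int_{0}^{t}\nabla^{v}\theta(dX,\mathbf{v}dX)_{s}$, and then invoke the preceding Lemma to rewrite the quadratic term: at $p = u\cdot g$ one has $\nabla^{v}\theta(A,\mathbf{v}A)_{p} = \nabla^{G}(u^{*}\theta)(\xi,\xi)_{g}$, where $\xi$ is the vertical part of $A$.

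Next I would handle the Stratonovich term. Since $\theta$ is vertical, $\int_{0}^{t}\theta\,\delta^{v}X_{s}$ should only see the vertical motion of $X$, i.e. the $G$-factor $V$; using the affine-isometry property of $u \colon G \to \pi^{-1}(x)$ and the It\^o formula for the vertical Stratonovich integral (equation \eqref{itostratonovichformula}), I would express $\int_{0}^{t}\theta\,\delta^{v}X_{s}$ in terms of $\int_{0}^{t}(u^{*}\theta)\,\delta V_{s}$. Reassembling the two pieces and applying the \emph{ordinary} change formula on $G$ (the one on which Proposition \ref{changeverticalformula} is modeled), the sum collapses to $\int_{0}^{t}(u^{*}\theta)\,d^{G}V_{s}$ up to a real local martingale coming from the horizontal directions of $X$ (which contribute nothing vertical because $\theta$ is vertical and $\nabla^{v}_{A^{h}}\xi = 0$ by the totally-geodesic-fibers hypothesis, exactly as exploited in the Lemma). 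Thus $\int_{0}^{t}\theta\,d^{v}X_{s}$ is a real local martingale for every vertical $\theta$ if and only if $\int_{0}^{t}\eta\,d^{G}V_{s}$ is a real local martingale for every $1$-form $\eta$ on $G$ --- the first is the definition of $X$ being a vertical martingale, the second the definition of $V$ being a $\nabla^{G}$-martingale. The surjectivity of $\theta \mapsto u^{*}\theta$ from vertical forms on $P$ onto $1$-forms on $G$ (again because $u$ is a diffeomorphism onto the fiber) is what makes the equivalence go both ways.

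**The main obstacle.** The delicate point is bookkeeping the decomposition $X = X^{h}\cdot V$ rigorously under the stochastic differentials: one must be careful that the vertical projection $\mathbf{v}\,d^{2}X$ of the second-order differential genuinely corresponds, fiberwise, to $d^{2}V$ transported by $u$, and that the cross-terms $d[X^{h},V]$ do not leak into the vertical It\^o integral. This is where the hypotheses that $\nabla^{k}$ is torsion-free, that $\pi_{P}$ has totally geodesic fibers, and that $u$ is an affine isometry all have to be used simultaneously --- precisely the combination already packaged in the preceding Lemma. I expect the cleanest write-up to avoid re-deriving these identities and instead quote the Lemma for the quadratic term, quote \eqref{itostratonovichformula} for the Stratonovich term, and quote the Arnaudon--Paycha decomposition \cite{arnaudon-paycha} for the fact that the $G$-factor of $X$ is exactly $V$; the only genuinely new computation is the cancellation in the reassembly step, and a stopping-time localization argument (as in Proposition \ref{equivalencemartingale}) to reduce everything to a single chart of the form \eqref{coordinates}.
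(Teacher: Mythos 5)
Your proposal follows essentially the same route as the paper's proof: apply Proposition \ref{changeverticalformula} to split $\int\theta\,d^{v}X$ into the vertical Stratonovich integral plus the quadratic term, reduce the Stratonovich part to $\int X^{h*}\theta\,\delta V$ using the decomposition $X=X^{h}\cdot V$ and the fact that the horizontal factor contributes nothing against a vertical form, convert the quadratic term via the preceding Lemma into $\nabla^{G}$ applied to the pulled-back form, and reassemble with the ordinary It\^o--Stratonovich change formula on $G$ to obtain $\int X^{h*}\theta\,d^{G}V$. This matches the paper's argument; your added remarks on surjectivity of $\theta\mapsto u^{*}\theta$ and on localization are reasonable refinements of details the paper leaves implicit.
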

\begin{proof}
Let $X_t$ be a semimartingale in $P$ such that $X= X^h \cdot V$ as was explained above. Take a vertical form $\theta$
on $E$. By Proposition \ref{changeverticalformula},
\[
\int \theta d^{v} X = \int \theta \delta^{v}(X^{h}\cdot V) - \frac{1}{2}\int \nabla^{v*}\theta(dX,\mathbf{v}dX).
\]
A simple account yields
\[
\int \theta \delta^{v}(X^{h}\cdot V) = \int R_{V}^{*}\theta \delta^{v}X^{h} + \int X^{h*}\theta\delta V = \int X^{h*}\theta \delta V,
\]
where we used in last equality the fact that $X^{h}$ be a horizontal process and that every vertical form is the write
as $f\omega$, where $f \in C^{\infty}(P)$ and $\omega$ is the connection form. From this and Lemma above we conclude
that
\[
\int \theta d^{v} X = \int X^{h*} \theta\delta V - \frac{1}{2}\int \nabla^{G*}(X^{h*}\theta)(dV,dV) = \int X^{h*}\theta d^{G}V,
\]
where the last equality is due to change formula between Stratonovich and It\^o integral. Since $\theta$ is arbitrary,
it follows that $X$ is a vertical martingale if and only if $V$ is a $\nabla^{G}$-martingale.\qed
\end{proof}

\end{document}